\newtheorem{thm}{Theorem}
\newtheorem{lemma}[thm]{Lemma}
\newtheorem{prop}[thm]{Proposition}
\newtheorem{remark}[thm]{Remark}
\newtheorem{defi}[thm]{Definition}
\newcommand{\R}{\mathbb{R}}
\newcommand{\Z}{\mathbb{Z}}
\newcommand{\C}{\mathbb{C}}
\renewcommand\phi{\varphi}
\newcommand{\cQ}{\mathcal{Q}}
\newcommand{\cV}{\mathcal{V}}
\newcommand{\cD}{\mathcal{D}}
\newcommand{\cL}{\mathcal{L}}
\newcommand{\cI}{\mathcal{I}}
\renewcommand{\geq}{\geqslant}
\renewcommand{\leq}{\leqslant}
\renewcommand{\tilde}{\widetilde}
\newcommand{\be}{\begin{equation}}
\newcommand{\ee}{\end{equation}}
\newcommand{\bq}{\begin{equation}}
\newcommand{\eq}{\end{equation}}
\newcommand{\eps}{\varepsilon}
\title[Multiple solutions for a self-consistent 2d Dirac equation]{Multiple solutions for a self-consistent Dirac equation in two dimensions}
\author[W. Borrelli]{William Borrelli}
\address{Universit\'e Paris-Dauphine, PSL Research University, CNRS, UMR 7534, CEREMADE, F-75016 Paris, France} 
\email{borrelli@ceremade.dauphine.fr}
\date{\today}
\DeclareRobustCommand{\SkipTocEntry}[5]{}
\begin{document}

\begin{abstract}
This paper is devoted to the variational study of an effective model for the electron transport in a graphene sample. We prove the existence of infinitely many stationary solutions for a nonlinear Dirac equation which appears in the WKB limit for the Schr\"{o}dinger equation describing the semi-classical electron dynamics. The interaction term is given by a mean field, self-consistent potential which is the trace of the 3D Coulomb potential. Despite the nonlinearity being 4-homogeneous, compactness issues related to the limiting Sobolev embedding $H^{\frac{1}{2}}(\Omega,\C)\hookrightarrow L^{4}(\Omega,\C)$ are avoided thanks to the regularization property of the operator $(-\Delta)^{-\frac{1}{2}}$. This also allows us to prove smoothness of the solutions. Our proof follows by direct arguments.
\end{abstract}

\maketitle

\medskip

\medskip

\section{Introduction}
New two-dimensional materials possessing Dirac fermions as low-energy excitations have been discovered, the most famous being graphene (see, e.g. \cite{graphene,diracmaterials,introdiracmaterials}), which can be described as a honeycomb lattice of carbon atoms. Those \textit{Dirac materials} possess unique electronic properties which are consequences of the Dirac spectrum. The conical intersection of quasi-particle dispersion relation at degeneracy points leads to an effective massless Dirac equation. A rigorous proof of the existence of Dirac cones in honeycomb structures can be found in \cite{FWhoneycomb}. In this paper we are interested in the mathematical analysis of a model for the quantum transport of electrons in a graphene sample, modeled as a bounded domain in the plane. In \cite{arbunichsparber} the authors gave a rigorous proof of the large, but finite, time-scale validity of a cubic Dirac equation, as a good approximation for the dynamics of the cubic nonlinear Schr\"{o}dinger equation (NLS) with a honeycomb potential, in the weakly nonlinear regime. Their analysis indicates that an analogous result holds for the case of NLS with Hartree nonlinearity. We remark that the linear dynamics has been studied by Fefferman and Weinstein in \cite{FWwaves}.

The (semi-classical) dynamics of electrons in a graphene layer can be described by the following NLS, the interaction being described by a self-consistent potential:
\be\label{nlshartree}
\begin{cases}
i\eps\partial_{t}\Phi^{\eps}=-\eps^{2}\Delta\Phi^{\eps}+V\left(\frac{x}{\eps}\right)\Phi^{\eps}+\eps\kappa\left(\frac{1}{\vert x  \vert}*\vert\Phi^{\eps}\vert^{2} \right)\Phi^{\eps}\\
\Phi^{\eps}(0,x)=\Phi^{\eps}_{0}(x)
\end{cases}
\ee
where $V$ is a honeycomb potential and $\kappa\in\R$ is a coupling constant.

One expects that, as $\eps\rightarrow0$, the dynamics of WKB waves spectrally concentrated around a vertex of the Brillouin zone of the lattice (where the conical degeneracy occurs) can be effectively described by the following Dirac-Hartree equation:
\be\label{dirachartree}
\begin{cases}
i\partial_{t}\varphi=-i\tilde{\sigma}\cdot\nabla\varphi+\kappa\left(\frac{1}{\vert x\vert}*\vert\varphi\vert^{2}\right)\varphi\\
\varphi(0,x)=\varphi_{0}(x)
\end{cases}
\ee
where $\phi:\R_{t}\times\R^{2}_{x}\longrightarrow\C^{2}$, $\tilde{\sigma}:=(\tilde{\sigma}_{1},\tilde{\sigma}_{2})=(\Lambda\sigma_{1},\Lambda\sigma_{2})$, with $\sigma_{i}$ being the first two Pauli matrices 

\begin{equation}\label{pauli} \sigma_{1}:=\begin{pmatrix} 0 \quad& 1 \\ 1 \quad& 0 \end{pmatrix}\quad,\quad \sigma_{2}:=\begin{pmatrix} 0 \quad& -i \\ i \quad& 0 \end{pmatrix} \end{equation}
and
\begin{equation}\label{plambda} \Lambda:=\begin{pmatrix} \overline{\lambda} \quad& 0 \\ 0 \quad& \lambda \end{pmatrix}.\end{equation}
Here $\lambda$ is a constant depending on the potential $V$ (formula 4.1 in \cite{FWhoneycomb}). 

In the above model, the particles can move in all the plane and the potential is the trace on the plane $\{z=0\}$ of the 3-D Coulomb potential. 
\begin{remark}
While the electrons in graphene are essentially confined in 2-D, the electric field clearly still acts in all three spatial dimensions. This justifies the choice of the 3-D Coulomb potential, given by the Riesz potential $(-\Delta)^{-\frac{1}{2}}$. 
\end{remark}
We consider the case where the electrons are constrained to a bounded domain $\Omega\subseteq\R^{2}$ modeling an electronic device. Following El-Hajj and Mehats \cite{mehats}, we define the self-consistent potential using the spectral resolution of $(-\Delta)^{\frac{1}{2}}$ with zero boundary conditions, in order to describe confinement of the electrons. In the same paper, the authors give a formal derivation of the potential $\cV$, starting from the 3-D Poisson equation. Moreover, in \cite{mehats} El-Hajj and Mehats also  prove local well-posedness for two models of electron transport in graphene. More precisely, they treat both the case where $\Omega=\R^{2}$ and $\Omega\subseteq\R^{2}$ is a bounded domain. In the latter case, they replace the Dirac operator by $\sigma_{1}(-\Delta)^{\frac{1}{2}}$, with zero Dirichlet data. It is easy to see that this operator displays a conical band dispersion structure and being off-diagonal it also couples valence and conduction bands, thus mimicking the Dirac operator. More details can be found in the references cited in \cite{mehats}.
\begin{remark}
In the case of graphene the zero-energy for the Dirac operator corresponds to the Fermi level. Then the positive part of the spectrum corresponds to massive conduction electrons, while the negative one to valence electrons. 
\end{remark}
In the present paper we will instead work with the Dirac operator under suitable boundary conditions. It is well-known, in fact, that the Dirac operator, as well as general first order elliptic operators, is not self-adjoint with Dirichlet boundary conditions (see sections 8.2, 10.1 in \cite{reedsimonI} for a counterexemple). 

From now on $\Omega\subseteq\R^{2}$ will denote a smooth bounded open set.

Let $(e_{n})_{n\in\mathbb{N}}\subseteq L^{2}(\Omega)$ be an orthonormal basis of eigenfunctions of the Dirichlet laplacian $(-\Delta)$, with associated eigenvalues $0<\mu_{n}\uparrow+\infty$. 

We define the potential $\cV(\varphi)$ as 
\be\label{potential}
\cV(\varphi):=\sum_{n\geq 0}\mu^{-\frac{1}{2}}_{n}\langle\vert\varphi\vert^{2},e_{n}\rangle e_{n}
\ee
Thus $\cV$ satisfies
\be\label{hartree}
(-\Delta)^{\frac{1}{2}}\cV=\vert\varphi\vert^{2} \qquad\mbox{in} \quad\Omega.
\ee

Working on a bounded domain, we need to choose \textit{local} (for physical reasons) boundary conditions for the Dirac operator. We shall use \textit{infinite mass boundary conditions}, which have been employed in the Physics literature to model quantum dots in graphene (see \cite{terminated} and reference therein). In \cite{infinite} the Dirac operator with infinite mass conditions is proved to be the limit, in the sense of spectral projections, of a Dirac operator with a mass term supported outside $\Omega$, as the mass goes to infinity.


Formally, infinite mass boundary conditions are defined imposing 
\be\label{proj} P\psi:=\frac{1}{2}(\mathds{1}_{2}-\tilde{\sigma}\cdot \boldsymbol{t})\psi=0, \qquad\mbox{on} \quad\partial\Omega
\ee

where $\boldsymbol{t}$ is the tangent to the boundary and $\mathds{1}_{2}$ is the unit matrix. It can be easily seen that such conditions make the Dirac operator $$T:=(-i\tilde{\sigma}\cdot\nabla)$$ symmetric on $L^{2}({\Omega,\C^{2}})$. 

They actually belong to a larger class of local boundary conditions for the Dirac operator (see \cite{terminated}) employed in the theory of graphene, and which are related to M.I.T. and chiral boundary conditions.

\begin{prop}
The unbounded operator $\cD$ formally acting as $T:=(-i\tilde{\sigma}\cdot\nabla)$ on $L^{2}(\Omega,\C^{2})$ is self-adjoint on the domain
\be\label{domain}
D_{\infty}:=\{\psi\in H^{1}(\Omega,\C^{2}) : (P\circ\gamma)\psi=0\}
\ee
where $P$ is the matrix defined in (\ref{proj}) and $\gamma$ is the trace operator.
\end{prop}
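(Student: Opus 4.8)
The plan is to show that $\cD$ is symmetric on $D_\infty$ and that $D(\cD^{*})\subseteq D_\infty$; since symmetry is exactly the inclusion $\cD\subseteq\cD^{*}$, the two facts together give $\cD=\cD^{*}$. Concretely the argument splits into: (a) the boundary term produced when one integrates $T$ by parts vanishes on $D_\infty\times D_\infty$, and (b) every $\psi\in D(\cD^{*})$ already belongs to $H^{1}(\Omega,\C^{2})$ and satisfies $(P\circ\gamma)\psi=0$.

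First I would record the relevant algebra: each $\tilde\sigma_{j}=\Lambda\sigma_{j}$ is Hermitian and $\tilde\sigma_{j}\tilde\sigma_{k}+\tilde\sigma_{k}\tilde\sigma_{j}=2|\lambda|^{2}\delta_{jk}\mathds{1}_{2}$, so that for the orthonormal boundary frame $(\boldsymbol{t},\boldsymbol{n})$ the Hermitian matrices $\tilde\sigma\cdot\boldsymbol{t}$ and $\tilde\sigma\cdot\boldsymbol{n}$ anticommute. Integration by parts gives
\begin{equation*}
\langle T\psi,\phi\rangle-\langle\psi,T\phi\rangle=i\int_{\partial\Omega}\langle\gamma\psi,(\tilde\sigma\cdot\boldsymbol{n})\gamma\phi\rangle\,d\sigma .
\end{equation*}
Since $\tilde\sigma\cdot\boldsymbol{n}$ interchanges the two eigenspaces of $\tilde\sigma\cdot\boldsymbol{t}$, and the condition $P\gamma\phi=0$ forces $\gamma\phi$ (and likewise $\gamma\psi$) into a single eigenspace, the vector $(\tilde\sigma\cdot\boldsymbol{n})\gamma\phi$ is pointwise orthogonal to $\gamma\psi$; hence the integrand vanishes and $\cD$ is symmetric. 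Squaring the operator, $T^{2}=-|\lambda|^{2}\Delta\,\mathds{1}_{2}$, and integrating by parts once more yields on $D_\infty$ an identity $\|T\psi\|_{L^{2}}^{2}=|\lambda|^{2}\|\nabla\psi\|_{L^{2}}^{2}+(\text{boundary term})$, whose boundary contribution is controlled by the curvature of $\partial\Omega$ (bounded, $\Omega$ being smooth and bounded) together with the boundary condition. This produces the a priori bound $\|\psi\|_{H^{1}}\lesssim\|T\psi\|_{L^{2}}+\|\psi\|_{L^{2}}$ on $D_\infty$; in particular the graph norm is equivalent to the $H^{1}$ norm there, and since $P\circ\gamma$ is continuous on $H^{1}$, the domain $D_\infty$ is closed and $\cD$ is a closed operator.

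For the inclusion $D(\cD^{*})\subseteq D_\infty$, let $\psi\in D(\cD^{*})$ with $\cD^{*}\psi=\eta\in L^{2}$. Testing $\langle T\phi,\psi\rangle=\langle\phi,\eta\rangle$ against $\phi\in C_{c}^{\infty}(\Omega,\C^{2})$ gives $T\psi=\eta$ in the sense of distributions, and applying $T$ again, $-|\lambda|^{2}\Delta\psi=T\eta$, so interior elliptic regularity yields $\psi\in H^{1}_{loc}$. The delicate point is regularity up to $\partial\Omega$: I would flatten the boundary, estimate tangential derivatives by Nirenberg difference quotients (using the a priori estimate above in the model half-space), and then recover the normal derivative algebraically from $T\psi=\eta$, where the invertibility of $\tilde\sigma\cdot\boldsymbol{n}$ (ellipticity of $T$) is essential. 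This upgrades $\psi$ to $H^{1}(\Omega,\C^{2})$. With $\psi\in H^{1}$ the integration by parts of the previous step is justified, so the adjoint identity collapses to $\int_{\partial\Omega}\langle\gamma\psi,(\tilde\sigma\cdot\boldsymbol{n})\gamma\phi\rangle\,d\sigma=0$ for all $\phi\in D_\infty$. As $\gamma\phi$ ranges over the whole admissible eigenspace of $\tilde\sigma\cdot\boldsymbol{t}$ (by surjectivity of the trace onto the corresponding subspace of $H^{1/2}(\partial\Omega)$), decomposing $\gamma\psi$ into eigencomponents forces its component in the complementary eigenspace to vanish, i.e. $P\gamma\psi=0$. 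Hence $\psi\in D_\infty$.

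Combining the two inclusions gives $\cD=\cD^{*}$. The only step that is not essentially linear algebra or a standard property of the trace is the boundary elliptic regularity in the previous paragraph: for a first-order system scalar elliptic theory cannot be quoted directly, and I expect the interplay between the a priori $H^{1}$ estimate, the difference-quotient argument, and the algebraic inversion of $\tilde\sigma\cdot\boldsymbol{n}$ to recover $\partial_{\boldsymbol{n}}\psi$ to be the main obstacle. The vanishing of the boundary term and the final matching of boundary data, by contrast, follow directly from the anticommutation of $\tilde\sigma\cdot\boldsymbol{t}$ and $\tilde\sigma\cdot\boldsymbol{n}$.
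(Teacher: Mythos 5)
Your overall skeleton---symmetry via the anticommutation of $\tilde\sigma\cdot\boldsymbol{t}$ and $\tilde\sigma\cdot\boldsymbol{n}$, plus the inclusion $D(\cD^{*})\subseteq D_{\infty}$---is the standard route, and it is essentially the method of the references the paper points to (the paper itself gives no proof: it only cites \cite{calderon} and \cite{selfadjoint}). The symmetry half of your argument is correct and complete, modulo the normalization $|\lambda|=1$, which is needed for $P$ to be a projector, i.e.\ for $(\tilde\sigma\cdot\boldsymbol{t})^{2}=\mathds{1}_{2}$ (this point is implicit in the paper as well).

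The genuine gap is in the second half, and it is not a technicality you can defer: your argument obtains $H^{1}$-regularity of $\psi\in D(\cD^{*})$ \emph{before} using any boundary information, and only afterwards derives $P\gamma\psi=0$ from the vanishing boundary pairing. This order cannot work. The maximal domain $\{\psi\in L^{2}:T\psi\in L^{2}\}$ strictly contains the set of $H^{1}$ spinors: on the unit disk (with $\lambda=1$) any spinor $\psi=(f,0)^{T}$ with $f$ holomorphic and square-integrable but not in $H^{1}$ (for instance $f(z)=\sum_{n\geq1}n^{-1/2}z^{n}$) satisfies $T\psi=0\in L^{2}$, yet $\psi\notin H^{1}(\Omega,\C^{2})$. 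Hence interior regularity, tangential difference quotients, and algebraic recovery of the normal derivative, applied to a generic element of the maximal domain, can never produce $\psi\in H^{1}$; moreover, your a priori estimate is only available on $D_{\infty}$, and neither $\psi$ nor its translates are known to lie there (a priori $\psi$ does not even have a trace better than $H^{-1/2}(\partial\Omega)$). Any correct proof must inject the weak boundary condition encoded in the adjoint relation $\langle T\phi,\psi\rangle=\langle\phi,\eta\rangle$ for all $\phi\in D_{\infty}$ into the regularity argument from the very start: one first shows that the distributional trace of $\psi$ satisfies $P\gamma\psi=0$ in the weak ($H^{-1/2}$) sense, and then uses the ellipticity (Lopatinskii--Shapiro property) of this boundary condition---via a Calder\'on projector as in \cite{calderon}, or the boundary analysis of \cite{selfadjoint}---to upgrade the trace to $H^{1/2}(\partial\Omega)$ and hence $\psi$ to $H^{1}(\Omega,\C^{2})$. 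That step is the entire technical content of the cited references, and it is precisely what your sketch replaces with an argument that, as ordered, is circular.
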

The theorem can be proved using the abstract results in \cite{calderon} or following the method of \cite{selfadjoint}, which only requires $C^{2}$-regularity of the boundary.

The model we are going to study thus is given by 
\be\label{diracnewhartree}
\begin{cases}
i\partial_{t}\phi=\cD\phi+\kappa\cV(\phi)\phi,\qquad\mbox{in}\quad\R\times\Omega\\
\phi(0,x)=\phi_{0}(x)
\end{cases}
\ee

Before stating our main theorem, we quickly review the spectral theory for the Dirac operator with infinite-mass boundary conditions.

The compactness of the Sobolev embedding $H^{1}(\Omega,\C^{2})\hookrightarrow L^{2}(\Omega,\C^{2})$ gives that the spectrum of $\cD$ is discrete. Moreover, the domain $D_{\infty}$ is invariant with respect to the antiunitary transformation $\mathcal{U}:=\sigma_{1}\mathcal{C}$, where $\mathcal{C}$ is the complex conjugation on $L^{2}(\Omega,\C^{2})$. Given $\varphi\in D_{\infty}$ we have $$\mathcal{U}\cD\varphi=-\cD\mathcal{U}\varphi. $$
The above observations can be summarized in the following
\begin{prop}
The spectrum $\sigma(\cD)\subseteq\R$ of $\cD$ is purely discrete, symmetric and accumulates at $\pm\infty$.
\end{prop}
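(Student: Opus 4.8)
The plan is to establish the three assertions separately, relying on the self-adjointness proved in the preceding proposition and on the two structural facts recorded just before the statement: the inclusion $D_\infty\subseteq H^1(\Omega,\C^2)$ combined with Rellich's theorem, and the anticommutation $\mathcal{U}\cD=-\cD\mathcal{U}$ on $D_\infty$. That the spectrum is real is immediate, since $\cD$ is self-adjoint.

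First I would show that $\cD$ has compact resolvent, which gives discreteness. Fixing $\lambda$ in the nonempty resolvent set, the map $(\cD-\lambda)^{-1}\colon L^2(\Omega,\C^2)\to D_\infty$ is bounded for the graph norm. Because $\cD$ is a first-order operator whose square is, up to lower-order and boundary contributions, comparable to $-\Delta$, an elliptic estimate shows that on $D_\infty$ the graph norm $\|\psi\|_{L^2}^2+\|\cD\psi\|_{L^2}^2$ is equivalent to the $H^1$ norm; hence the resolvent is in fact bounded from $L^2(\Omega,\C^2)$ into $H^1(\Omega,\C^2)$. Composing with the compact embedding $H^1(\Omega,\C^2)\hookrightarrow L^2(\Omega,\C^2)$ on the bounded smooth set $\Omega$ makes $(\cD-\lambda)^{-1}$ a compact operator on $L^2(\Omega,\C^2)$. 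A self-adjoint operator with compact resolvent has purely discrete spectrum, consisting of isolated real eigenvalues of finite multiplicity, which settles the first claim.

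Next I would exploit the antiunitary $\mathcal{U}=\sigma_1\mathcal{C}$ to obtain the symmetry. Let $\lambda\in\sigma(\cD)$; by discreteness it is an eigenvalue, say $\cD\varphi=\lambda\varphi$ with $\varphi\in D_\infty\setminus\{0\}$ and $\lambda\in\R$. Using the $\mathcal{U}$-invariance of $D_\infty$ together with the anticommutation relation,
\[
\cD(\mathcal{U}\varphi)=-\mathcal{U}(\cD\varphi)=-\mathcal{U}(\lambda\varphi)=-\overline{\lambda}\,\mathcal{U}\varphi=-\lambda\,\mathcal{U}\varphi,
\]
where the antilinearity of $\mathcal{U}$ produces the complex conjugate, equal to $\lambda$ since $\lambda$ is real. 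As $\mathcal{U}$ is an isometry, $\mathcal{U}\varphi\neq0$, so $-\lambda$ is again an eigenvalue and $\sigma(\cD)$ is invariant under $\lambda\mapsto-\lambda$. Finally, since $L^2(\Omega,\C^2)$ is infinite-dimensional and $\cD$ has compact resolvent, the spectral theorem yields an orthonormal basis of eigenfunctions with real eigenvalues $(\lambda_n)$ satisfying $|\lambda_n|\to+\infty$; in particular there are infinitely many of them, with the only possible accumulation points at $+\infty$ and $-\infty$. The symmetry just proved forces both to occur, since any subsequence escaping to $+\infty$ is mirrored by one escaping to $-\infty$.

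The only genuinely technical point I expect is the elliptic estimate identifying the graph norm of $\cD$ with the $H^1$ norm on $D_\infty$, which is what turns the abstract inclusion $D_\infty\subseteq H^1$ into a compactness statement; everything else is a direct consequence of self-adjointness and the two quoted observations. In practice one can sidestep even this by invoking the abstract fact that a self-adjoint operator whose domain embeds compactly into the ambient Hilbert space automatically has compact resolvent, which is exactly the situation here.
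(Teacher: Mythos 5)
Your proposal is correct and follows essentially the same route as the paper, which obtains the result precisely from the two observations you use: discreteness via the compact embedding $H^{1}(\Omega,\C^{2})\hookrightarrow L^{2}(\Omega,\C^{2})$ applied to the domain $D_{\infty}$, and symmetry (hence accumulation at both $+\infty$ and $-\infty$) via the antiunitary $\mathcal{U}=\sigma_{1}\mathcal{C}$ anticommuting with $\cD$. The paper merely states these facts and declares the proposition a summary of them, so your write-up is in fact a more detailed version of the same argument.
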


Let $(\psi_{k})_{k\in\Z}$ be a Hilbert basis of $L^{2}(\Omega,\C^{2})$ composed of eigenspinors of $\cD$, and $(\lambda_{k})_{k\in\Z}$ the associated eigenvalues, with $\lim_{k\rightarrow\pm\infty}\lambda_{k}=\pm\infty$.

It has been noted in the Physics literature that Dirac operators with infinite mass boundary condition are gapped. A rigorous proof has been recently given in \cite{gapdirac}, where the following result is proved.
\begin{prop}\label{gap}
For any $k\in\Z$ we have $$\lambda^{2}_{k}\geq \frac{2\pi}{\vert\Omega\vert}, $$
where $\vert\Omega\vert$ denotes the area of $\Omega$.
\end{prop}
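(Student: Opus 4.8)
The plan is to prove the stronger statement that the whole spectral gap of $\cD^2$ is bounded below, namely $\inf_{0\neq\psi\in D_\infty}\|\cD\psi\|^2/\|\psi\|^2 \geq 2\pi/|\Omega|$; since $(\lambda_k^2)_{k\in\Z}$ are exactly the eigenvalues of the nonnegative self-adjoint operator $\cD^2$, this yields $\lambda_k^2\geq 2\pi/|\Omega|$ for every $k$. The first step is algebraic: the matrices $\tilde\sigma_1,\tilde\sigma_2$ built from (\ref{pauli})--(\ref{plambda}) satisfy $\tilde\sigma_i\tilde\sigma_j+\tilde\sigma_j\tilde\sigma_i = 2|\lambda|^2\delta_{ij}\mathds{1}_2$, and the requirement that $P$ in (\ref{proj}) be an orthogonal projection forces $|\lambda|=1$; hence $\cD^2 = -(\tilde\sigma\cdot\nabla)^2 = -\Delta$, acting componentwise.

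Next I would establish a curvature identity for the quadratic form. Writing $\psi = (u,v)^{\top}$ and expanding in the variables $\partial_1\pm i\partial_2$, one finds
\[
\|\cD\psi\|_{L^2(\Omega)}^2 = \|\nabla u\|_{L^2}^2 + \|\nabla v\|_{L^2}^2 + 2\int_\Omega\mathrm{Im}(\partial_1 u\,\overline{\partial_2 u}) - 2\int_\Omega\mathrm{Im}(\partial_1 v\,\overline{\partial_2 v}),
\]
and the two cross terms are exact divergences, so they reduce to the tangential boundary integral $\tfrac1i\int_{\partial\Omega}(u\,\overline{\partial_t u}-v\,\overline{\partial_t v})\,ds$. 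The boundary condition (\ref{proj}) is equivalent to $v = \lambda\tau u$ on $\partial\Omega$, where $\tau=t_1+it_2$ is the unit tangent read as a complex number; differentiating this relation along $\partial\Omega$ (so that $\partial_t\tau = i\kappa\tau$, with $\kappa$ the signed curvature) and substituting collapses the boundary term to $\int_{\partial\Omega}\kappa|u|^2\,ds$. Since $|u|=|v|$ on $\partial\Omega$, this gives, for every $\psi\in D_\infty$,
\[
\|\cD\psi\|_{L^2(\Omega)}^2 = \|\nabla\psi\|_{L^2(\Omega)}^2 + \tfrac12\int_{\partial\Omega}\kappa\,|\psi|^2\,ds.
\]

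It then remains to prove the geometric inequality $\|\nabla\psi\|^2 + \tfrac12\int_{\partial\Omega}\kappa|\psi|^2 \geq \tfrac{2\pi}{|\Omega|}\|\psi\|^2$ for all admissible $\psi$, and this is the main obstacle. The difficulty is that for non-convex $\Omega$ the curvature $\kappa$ changes sign, so the boundary term is not a priori controllable and must be played off against the Dirichlet energy using global information about $\Omega$; the two constants are revealing, since $\int_{\partial\Omega}\kappa\,ds = 2\pi$ by Gauss--Bonnet and the factor $1/|\Omega|$ points to an isoperimetric extremum attained by the disk. I would attack this by pulling the problem back through a Riemann map $F:\mathbb{D}\to\Omega$, exploiting the conformal covariance of the massless operator $\cD$ and the conformal invariance of the infinite-mass condition (which depends only on the tangent direction); this reduces matters to the unit disk, where the spectrum is explicit, while the area $|\Omega| = \int_{\mathbb{D}}|F'|^2$ enters through a Cauchy--Schwarz/Jensen estimate to produce the sharp constant, with equality exactly for the disk. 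An alternative would be a symmetrization argument, but in either route the sign-changing curvature term is the crux; by contrast the reduction to $\cD^2=-\Delta$ and the curvature identity are routine.
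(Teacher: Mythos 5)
The first thing to note is that the paper does not prove Proposition \ref{gap} at all: it is quoted verbatim from the reference \cite{gapdirac} (Benguria--Fournais--Stockmeyer--Van den Bosch), so your attempt must be judged as a self-contained proof, and as such it has a genuine gap. Your first two steps are correct but routine: the relations $\tilde\sigma_i\tilde\sigma_j+\tilde\sigma_j\tilde\sigma_i=2\vert\lambda\vert^2\delta_{ij}\mathds{1}_2$ do hold, $\vert\lambda\vert=1$ is forced if $P$ in (\ref{proj}) is to be a projection, and the identity $\Vert\cD\psi\Vert_{L^2}^2=\Vert\nabla\psi\Vert_{L^2}^2+\tfrac12\int_{\partial\Omega}\kappa\vert\psi\vert^2\,ds$ for $\psi\in D_\infty$ (with $\kappa$ the signed curvature) is a known integration-by-parts computation. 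But the entire content of the proposition is then concentrated in the inequality $\Vert\nabla\psi\Vert^2+\tfrac12\int_{\partial\Omega}\kappa\vert\psi\vert^2\,ds\geq\tfrac{2\pi}{\vert\Omega\vert}\Vert\psi\Vert^2$, which you do not prove; you yourself call it ``the main obstacle'' and offer only a plan. A plan for the crux is not a proof, so the argument is incomplete exactly where the substance lies. (Note also that already $\int_{\partial\Omega}\kappa\,ds=2\pi$ requires $\Omega$ simply connected, an assumption neither you nor the paper states.)

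Worse, the plan aims at a false target. The bound $2\pi/\vert\Omega\vert$ is \emph{not} attained by the disk: for the unit disk the lowest positive eigenvalue of $\cD$ with infinite-mass boundary conditions is the first positive solution of $J_0(\lambda)=J_1(\lambda)$, i.e.\ $\lambda\approx 1.435$, so that $\lambda^2\vert\mathbb{D}\vert\approx 2.06\,\pi>2\pi$; in \cite{gapdirac} the question whether the disk minimizes $\lambda_1^2\vert\Omega\vert$ is in fact left open, and the conjectured optimal constant is strictly larger than $2\pi$. Hence any symmetrization or conformal-transplantation argument engineered so that ``equality holds exactly for the disk'' cannot yield this inequality. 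The conformal reduction itself also does not do what you want: under a Riemann map $F:\mathbb{D}\to\Omega$ the operator $\cD$ is conformally covariant only up to the weight $\vert F'\vert$, so the eigenvalue equation on $\Omega$ pulls back to a \emph{weighted} eigenvalue problem on $\mathbb{D}$; the spectrum is not a conformal invariant (only the zero modes transform cleanly), and the explicit disk spectrum gives no control after pullback. For comparison, the proof in \cite{gapdirac} never passes through curvature or isoperimetry: it exploits the off-diagonal Cauchy--Riemann structure of $\cD$ (the eigenvalue system couples $\partial_{\bar z}u$ to $v$ and $\partial_z v$ to $u$), Stokes-type identities encoding the boundary condition, and an Ahlfors--Beurling-type $L^2$ bound for the Cauchy transform on $\Omega$, whose norm $\sqrt{\vert\Omega\vert/\pi}$ is the true source of the constant $2\pi/\vert\Omega\vert$. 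To complete a proof along your lines you would need a substitute for that analytic input; Gauss--Bonnet plus the curvature identity cannot suffice, in particular because the boundary term has no sign on non-convex domains.
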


We look for stationary solutions to the equation (\ref{diracnewhartree}), that is, of the form $$ \phi(t,x)=e^{-i\omega t}\psi(x).$$
Plugging it into the equation one gets
\be\label{stationary}
(\cD-\omega)\psi+\kappa\cV(\psi)\psi=0
\ee
Our main result is the following:
\begin{thm}\label{main}
Fix $\omega\notin\sigma(\cD)$. Then equation (\ref{stationary}) admits infinitely many solutions in $C^{\infty}(\Omega,\C^{2})$ satisfying the boundary condition (\ref{proj}).
\end{thm}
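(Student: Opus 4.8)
The plan is to read (\ref{stationary}) as the Euler--Lagrange equation of a strongly indefinite, even functional and to extract infinitely many critical points by an equivariant minimax scheme; the one genuinely new point, as announced in the abstract, is that the smoothing built into the definition (\ref{potential}) of $\cV$ restores the compactness that is otherwise lost at the critical embedding $H^{\frac12}(\Omega)\hookrightarrow L^4(\Omega)$. Concretely, since $\omega\notin\sigma(\cD)$ and $\cD$ has a gap at $0$ by Proposition~\ref{gap}, the operator $\cD-\omega$ is boundedly invertible and $|\cD-\omega|^{\frac12}$ has form domain $\cH\subset H^{\frac12}(\Omega,\C^2)$, encoding (\ref{proj}) weakly. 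On $\cH$ I would consider
\be\label{functional}
\cE(\psi)=\tfrac12\pscal{(\cD-\omega)\psi,\psi}_{L^2}+\tfrac{\kappa}{4}\,\cQ(\psi),\qquad \cQ(\psi):=\pscal{\cV(\psi),|\psi|^2}_{L^2}=\sum_{n\ge0}\mu_n^{-\frac12}\pscal{|\psi|^2,e_n}^2 .
\ee
By (\ref{potential})--(\ref{hartree}) one has $\cQ(\psi)\ge0$, with $\cQ$ positively $4$-homogeneous, while $\cE$ is $C^1$, even, and invariant under $\psi\mapsto e^{i\theta}\psi$; its critical points are exactly the weak solutions of (\ref{stationary}).

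The crucial step is the compactness of the nonlinearity, and I expect it to be the main obstacle. If $\psi_n\wto\psi$ in $\cH$, then the embeddings $\cH\hookrightarrow L^p(\Omega,\C^2)$ are compact for every $p<4$ (only $p=4$ being critical), so $|\psi_n|^2\to|\psi|^2$ in $L^q(\Omega)$ for every $q<2$. Because defining $\cV$ amounts to applying $(-\Delta)^{-\frac12}$, whose smoothing turns $L^q$-convergence into convergence in a much higher $L^r$ (for instance $L^{3/2}\to L^{6}$), a Hölder pairing of the two strongly convergent factors gives $\cQ(\psi_n)\to\cQ(\psi)$ and, likewise, $\cV(\psi_n)\psi_n\to\cV(\psi)\psi$ strongly in $\cH^{*}$. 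Thus $\cQ$ is weakly sequentially continuous with compact gradient: this is precisely where the gain of one derivative bypasses the non-compactness of $H^{\frac12}\hookrightarrow L^4$, after which the remaining steps are standard.

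With this in hand I would decompose $\cH=\cH^+\oplus\cH^-$ into the positive and negative spectral subspaces of $\cD-\omega$, both infinite-dimensional since $\sigma(\cD)$ accumulates at $\pm\infty$, and on which $\pscal{(\cD-\omega)\cdot,\cdot}$ is positive, resp.\ negative, definite. Replacing $\cE$ by $-\cE$ when $\kappa>0$, I may assume the nonlinear term is a nonnegative, $4$-homogeneous (hence Ambrosetti--Rabinowitz superquadratic with exponent $\mu=4$) functional with compact derivative. The $4$-homogeneity forces Cerami sequences to be bounded, and the compactness above upgrades them to convergent ones, so $\cE$ satisfies the Cerami condition; being even and displaying the linking geometry attached to the splitting $\cH^+\oplus\cH^-$, it falls under a standard multiplicity theorem for strongly indefinite even functionals (a generalized fountain theorem of Bartsch--Ding type), which produces an unbounded sequence of critical values and correspondingly infinitely many solutions $\psi_j$ of (\ref{stationary}).

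Finally, to obtain smoothness I would bootstrap: writing $\psi=-\kappa\,(\cD-\omega)^{-1}[\cV(\psi)\psi]$, the inverse $(\cD-\omega)^{-1}$ gains one derivative while respecting (\ref{proj}), and $\cV(\psi)$ is itself one derivative smoother than $|\psi|^2$; starting from $\psi\in\cH\subset H^{\frac12}$ and iterating shows $\psi\in H^{s}(\Omega,\C^2)$ for every $s$, hence $\psi\in C^\infty(\Omega,\C^2)$, the boundary condition (\ref{proj}) holding at each stage through membership in $D_\infty$.
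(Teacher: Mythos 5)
Your strategy is essentially the paper's: the same action functional on the form domain of $|\cD-\omega|^{1/2}$, compactness of the nonlinear gradient via the smoothing of $(-\Delta)^{-1/2}$, an equivariant fountain-type minimax for strongly indefinite even functionals, and bootstrap regularity. The compactness discussion and the bootstrap are sound and match the paper. However, there is a genuine gap at the sentence ``The $4$-homogeneity forces Cerami sequences to be bounded.'' That is the Ambrosetti--Rabinowitz argument for functionals whose quadratic part is (semi)definite; here the quadratic part is strongly indefinite, with \emph{both} spectral subspaces infinite-dimensional. The identity
\be
\cI(\psi_{n})-\tfrac12\langle d\cI(\psi_{n}),\psi_{n}\rangle=\tfrac14\,\cQ(\psi_{n})
\ee
together with the Cerami condition only shows that the nonlinear term $\cQ(\psi_{n})$ stays bounded; since $\langle(\cD-\omega)\psi_{n},\psi_{n}\rangle=\Vert\psi_{n}^{+}\Vert^{2}-\Vert\psi_{n}^{-}\Vert^{2}$ controls a difference of norms and not the norm itself, no bound on $\Vert\psi_{n}\Vert$ follows from homogeneity alone.

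This is exactly where the paper does problem-specific work (Lemma \ref{precompact}), and the missing argument again rests on the regularization property rather than on homogeneity: from $\cQ(\psi_{n})=\Vert\cV(\psi_{n})\Vert^{2}_{\mathring{H}^{\frac12}}$ bounded one gets $\cV(\psi_{n})$ bounded in $L^{4}$; since $(-\Delta)^{-\frac12}$ is positivity-preserving (Beurling--Deny criterion --- a point absent from your sketch, and needed so that boundedness of $\cQ(\psi_n)$ yields an $L^{1}$ bound on the \emph{product} $\cV(\psi_{n})|\psi_{n}|^{2}$), the factorization $\cV(\psi_{n})|\psi_{n}|=\left(\cV(\psi_{n})|\psi_{n}|^{2}\right)^{\frac12}\cV(\psi_{n})^{\frac12}$ bounds $\cV(\psi_{n})\psi_{n}$ in $L^{\frac85}$; feeding this back into $d\cI(\psi_{n})\to0$ and using $(\cD-\omega)^{-1}\colon L^{\frac85}\to W^{1,\frac85}\hookrightarrow H^{\frac12}$ finally bounds $\psi_{n}$ in $H^{\frac12}$. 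Only then does your compactness step upgrade the sequence to a convergent one. A secondary omission: the fountain geometry is asserted, not verified; in particular the bound $\cI\leq0$ outside a large ball of $X^{-}\oplus\,{\rm span}\{\psi_{1},\dots,\psi_{j}\}$ requires proving $\cQ(\psi)\geq C_{j}\Vert\psi\Vert^{4}$ on the cone $\Vert\psi^{+}\Vert\geq\Vert\psi^{-}\Vert$ of that subspace (the paper argues by contradiction using weak lower semicontinuity and the finite dimensionality of the positive part), and the divergence $b_{j}\to+\infty$ uses the smoothing estimate $\cQ(\psi)\leq C\Vert\psi\Vert_{3}^{4}$. Citing a Bartsch--Ding type theorem instead of the paper's self-contained Borsuk--Ulam construction is a legitimate shortcut, but these hypotheses must still be checked for this specific $\cQ$.
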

We remark that a variational proof of existence and multiplicity for 3D Maxwell-Dirac and Dirac-Coulomb equations can be found in \cite{maxwelldirac}. Those results have been improved in \cite{Abenda}. The case of subritical Dirac equations on compact spin manifolds has been treated in \cite{Isobe}, for nonlinearity with polynomial growth, and using a Galerkin-type approximation. Our proof is variational and based on direct arguments. The present work has been inspired by the above mentioned articles and by the papers \cite{arbunichsparber,mehats}. 

For the sake of simplicity we will restrict ourselves to $\omega\in(-\lambda_{1},\lambda_{1})$.
\begin{remark}
Without loss of generality, we can choose $\kappa<0$. In particular, we take $\kappa=-1$. The case $\kappa>0$ follows considering the functional $$\cL(\psi):=-\cI(\psi)$$ (see below).
\end{remark}
Solutions to (\ref{stationary}) will be obtained as critical points of the functional
\be\label{action}
\cI(\psi)=\frac{1}{2}\int_{\Omega}\langle(\cD-\omega)\psi,\psi\rangle - \frac{1}{4}\int_{\Omega}\cV(\psi)\vert\psi\vert^{2}
\ee
which is defined and of class $C^{2}$ on the Hilbert space
\be\label{Hilbert}
X:=\left\{\psi\in L^{2}(\Omega,\C^{2}): \Vert\psi\Vert^{2}_{X}:=\sum_{k\in\Z}\vert\lambda_{k}-\omega\vert\vert\langle\psi,\psi_{k}\rangle\vert^{2}<\infty\right\}
\ee
endowed with the scalar product $$\langle\phi,\psi\rangle_{X}:=\langle\phi,\psi\rangle_{L^{2}}+\sum_{k\in\Z}\vert\lambda_{k}-\omega\vert\langle\phi,\psi_{k}\rangle_{L^{2}}\overline{\langle\psi,\psi_{k}\rangle}_{L^{2}}. $$

In the proof of Theorem 1 in \cite{selfadjoint} it is proved that for some $C>0$ there holds
\be\label{equivalent}
\Vert \varphi\Vert_{H^{1}}\leq C (\Vert \varphi\Vert_{L^{2}}+\Vert T\varphi\Vert_{L^{2}})
\ee
for all $\varphi\in D_{\infty}$, that is, for spinors in the operator domain.

This implies that the $H^{1}$-norm and the quantity in brackets in the r.h.s. of (\ref{equivalent}) are equivalent $D_{\infty}$. Then interpolating between $L^{2}(\Omega,\C^{2})$ and $D_{\infty}$, one gets the following
\begin{remark}
The $X$-norm above defined and the $H^{\frac{1}{2}}$-norm are equivalent on the space $X$. This will be repeatedly used in the sequel in connection with Sobolev embeddings.
\end{remark}
We can thus split $X$ as the direct sum of the positive and the negative spectral subspaces of $(\cD-\omega)$:
\be\label{subspaces}
X = X^{+}\oplus X^{-}
\ee
Accordingly we will write $\psi=\psi^{+}+\psi^{-}$. 

The functional (\ref{action}) then takes the form
\be\label{action2}
 \cI(\psi)=\frac{1}{2}\left(\Vert\psi^{+}\Vert^{2}_{X} - \Vert\psi^{-}\Vert^{2}_{X}\right) - \frac{1}{4}\int_{\Omega}\cV(\psi)\vert\psi\vert^{2}
 \ee
Smoothness of the solutions will follow by standard bootstrap arguments.
\begin{remark}
Despite the term in (\ref{action}) involving the potential being 4-homogeneous we can take advantage of regularization property of $(-\Delta)^{-\frac{1}{2}}$, thus avoiding compactness issues related to the limiting Sobolev embedding $X\hookrightarrow L^{4}$. This is in constrast with \cite{shooting}, where we studied the case of a Kerr-like, cubic nonlinearity. We dealt with the lack of compactness through a suitable radial ansatz, reducing the proof to dynamical systems arguments.
\end{remark}

In the sequel, we will denote $X$-norm and the $L^{p}$-norm of a spinor $\psi$ by $\Vert\psi\Vert$ and $\Vert\psi\Vert_{p}$, respectively. Occasionally, we will also omit the domain of definition of functions, denoting $L^{p}$ and Sobolev spaces.
\section{The variational argument}

This section is devoted to the proof of our main theorem. The strategy consists in exploiting the $\mathbb{Z}_{2}$-symmetry of the functional using topological arguments, in order to get multiple solutions. Our argument proceeds suitably splitting the Hilbert space $X$ according to the spectral decomposition of the operator $(\cD-\omega)$. This allows us to define an increasing sequences of critical values $c_{j}\uparrow+\infty$ for the action functional (\ref{action}). Compactness of critical sequences is proved exploiting the regularizing effect of $(-\Delta)^{-\frac{1}{2}}$.

It is easy to see that the functional $\cI$ is even :
$$\cI(-\psi)=\cI(\psi), \quad \forall\psi\in X $$
and this allows us to prove a multiplicity result using a straightforward generalization of the fountain theorem, well-known for semi-definite functionals (see, e.g. \cite{willem}). It in turn relies on the following infinite-dimensional Borsuk-Ulam theorem \cite{rothe}.

Let $H$ be a Hilbert space. 

\begin{defi}
We say that $\Phi:H\longrightarrow H$ is a \textit{Leray-Schauder map} (LS-map) if it is of the form
\be\label{ls}
\Phi = I+K
\ee
where $I$ is the identity and $K$ is a compact operator.
\end{defi}
\begin{thm}{(Borsuk-Ulam in Hilbert spaces)}\label{borsukulam}
Let $Y\leq H$ be a codimension one subspace of $H$ and $\mathcal{U}$ be a symmetric (i.e. $\mathcal{U}=-\mathcal{U}$) bounded neighborhood of the origin. If $\Phi:\partial\mathcal{U}\longrightarrow Y $ is an odd LS-map, then there exists $x\in\partial\mathcal{U}$ such that $\Phi(x)=0$.
\end{thm}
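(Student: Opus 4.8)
The plan is to deduce this from Leray--Schauder degree theory, the compactness of $K$ being precisely what makes the degree available and what replaces finite-dimensionality in the classical Borsuk--Ulam theorem. Throughout I regard $H$ as a real Hilbert space, so that the codimension one hypothesis gives a unit vector $e$ with $Y=e^{\perp}$ and $H\setminus Y$ consisting of two open half-spaces. Arguing by contradiction, I assume $\Phi(x)\neq 0$ for every $x\in\partial\mathcal{U}$; since $\Phi=I+K$ with $K$ compact, the restriction of $\Phi$ to the closed bounded set $\partial\mathcal{U}$ is proper (if $\Phi(x_{n})\to z$ then $x_{n}=\Phi(x_{n})-K(x_{n})$ has a convergent subsequence), so $\Phi(\partial\mathcal{U})$ is closed and $\mathrm{dist}(0,\Phi(\partial\mathcal{U}))>0$. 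After extending $K$ to a compact odd map on $\overline{\mathcal{U}}$ (Dugundji extension into the closed convex hull of $K(\partial\mathcal{U})$, followed by the symmetrization $x\mapsto\tfrac12(K(x)-K(-x))$, legitimate because $\mathcal{U}$ is symmetric and $K$ is already odd on $\partial\mathcal{U}$), the Leray--Schauder degree $d(\Phi,\mathcal{U},0)$ is well defined.

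The first step is to show that this degree is odd, hence nonzero. This is exactly Borsuk's odd--mapping theorem in the Leray--Schauder framework: since $\mathcal{U}$ is a symmetric bounded open neighbourhood of the origin, $\Phi=I+K$ is odd on $\partial\mathcal{U}$ and $0\notin\Phi(\partial\mathcal{U})$, the degree $d(\Phi,\mathcal{U},0)$ is an odd integer. I would either quote this from degree theory or prove it by the standard finite-dimensional reduction: approximate $K$ uniformly on $\overline{\mathcal{U}}$ by odd maps $K_{n}=P_{n}K$ with finite-dimensional range ($P_{n}$ orthogonal projections onto an increasing exhaustion of $H$), reduce the computation of the degree to the finite-dimensional spaces $H_{n}=P_{n}H$, and apply the classical Borsuk theorem that an odd continuous map which is nonzero on the boundary of a symmetric bounded neighbourhood in $\R^{n}$ has odd degree.

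The second step uses the codimension one hypothesis to show, to the contrary, that the degree vanishes. Here the key observation is that the whole ray $R:=\{te:\,t\geq 0\}$ is disjoint from $\Phi(\partial\mathcal{U})$: indeed $\Phi(\partial\mathcal{U})\subseteq Y$ by hypothesis, $R$ meets $Y$ only at the origin, and the origin is not in $\Phi(\partial\mathcal{U})$. Since $R$ is connected and contained in $H\setminus\Phi(\partial\mathcal{U})$, the function $y\mapsto d(\Phi,\mathcal{U},y)$ is constant along $R$. Choosing $t>\sup_{\overline{\mathcal{U}}}\|\Phi\|$ (finite because $\overline{\mathcal{U}}$ is bounded and $K$ is compact) gives $te\notin\Phi(\overline{\mathcal{U}})$, so the solution property forces $d(\Phi,\mathcal{U},te)=0$; by constancy along $R$ this yields $d(\Phi,\mathcal{U},0)=0$, contradicting the first step. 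Hence $\Phi$ must vanish somewhere on $\partial\mathcal{U}$.

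I expect the main obstacle to be the first step, namely establishing the oddness of the degree: this is where the $\mathbb{Z}_{2}$-symmetry is genuinely used and is the infinite-dimensional heart of the statement, and the finite-dimensional reduction must be carried out carefully so that the approximating maps stay odd and the degrees stabilize. The other technical points requiring care are the properness of $\Phi$ on $\partial\mathcal{U}$ (guaranteeing that $\Phi(\partial\mathcal{U})$ is closed with positive distance from $0$, so the degree is defined) and the odd, compact extension of $K$ to $\overline{\mathcal{U}}$. By contrast, the second step is soft: it only exploits that $Y$ is missed in the single direction $e$, letting us slide the target point to infinity along $e\perp Y$ without ever crossing $\Phi(\partial\mathcal{U})$.
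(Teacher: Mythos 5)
Your proof is correct: the properness of $\Phi$ on $\partial\mathcal{U}$ (which makes $\Phi(\partial\mathcal{U})$ closed and the degree well defined), the odd compact Dugundji extension, Borsuk's odd-mapping theorem for the Leray--Schauder degree, and the ray argument exploiting $\Phi(\partial\mathcal{U})\subseteq Y$ are all deployed correctly and yield the contradiction. (One fine point: writing $Y=e^{\perp}$ tacitly assumes $Y$ is closed; this is the intended reading, and it is how the theorem is used in the paper, where $Y$ is a closed span of basis vectors.) Your route, however, is genuinely different from the paper's. The paper does not argue through degree theory at all: it cites Rothe and describes the direct argument, in which the compact perturbation $K$ is approximated by finite-rank odd maps, the classical finite-dimensional Borsuk--Ulam theorem is applied to the resulting finite-dimensional problems to produce approximate zeros, and compactness of $K$ lets one pass to the limit and obtain an exact zero on $\partial\mathcal{U}$. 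Both proofs rest on the same finite-dimensional core, but they package it differently: your contradiction scheme (odd degree at $0$ versus zero degree at a far-away target) is short modulo the quoted Leray--Schauder machinery and isolates very cleanly where codimension one enters, namely that the ray $\{te:t\geq 0\}$ can be slid to infinity without ever meeting $\Phi(\partial\mathcal{U})$; the Rothe-style approximation argument is more elementary and self-contained, requiring no infinite-dimensional degree and no extension of $\Phi$ off the boundary, which is presumably why the paper appeals to it rather than to degree theory.
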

The proof of the above theorem is achieved approximating the compact map by finite rank operators and using the finite-dimensional Borsuk-Ulam theorem, as shown in \cite{rothe}.

Consider an Hilbert basis $(e_{k})_{k\in\Z}$ of $H$. For $j\in\Z$ we define
\be
H_{1}(j):=\overline{span\{e_{k}\}}^{k=j}_{-\infty},\quad H_{2}(j):=\overline{span\{e_{k}\}}^{+\infty}_{k=j}
\ee

Given $0<r_{j}<\rho_{j}$ we set
$$B(j):=\{\psi\in\mathbf{H}_{1}(j)\;:\;\Vert\psi\Vert\leq\rho_{j}\} $$
$$S(j):=\{\psi\in\mathbf{H}_{1}(j)\;:\;\Vert\psi\Vert=\rho_{j}\} $$
$$N(j):=\{\psi\in\mathbf{H}_{2}(j)\;:\;\Vert\psi\Vert=r_{j}\} $$

Let $\mathcal{L}\in C^{1}(H,\R)$ be an even functional of the form 
\be\label{standardform}
\cL(\psi)=\frac{1}{2}\langle L\psi,\psi\rangle+F(\psi)
\ee
where $$L:H_{1}(j)\oplus H_{2}(j)\longrightarrow H_{1}(j)\oplus H_{2}(j)$$ is linear, bounded and self-adjoint and $dF$ is a compact map.

It is a well-known result (see,e.g. (\cite{rabinowitz}, Appendix) and \cite{struwevariational}) that such a functional admits an odd pseudo-gradient flow of the form 

\be\label{pgflow}
\eta(t,*)=\Lambda(t,*)+K(t,*)=\Lambda_{1,j}(t,*)\oplus\Lambda_{2,j}(t,*)+K(t,*),
\ee

where $\Lambda_{i,j}(t,*):\mathbf{H}_{i}(j)\rightarrow\mathbf{H}_{i}(j)$ is an isomorphism ($i=1,2$), and $K(t,*)$ is a compact map.

\begin{thm}{(Fountain theorem)}\label{fountain}
With the above notations, define the min-max level
\be\label{minmax} 
c_{j}:=\inf_{\gamma\in\Gamma(j)}\sup \cL(\gamma(1,B(j)))
\ee
where $\Gamma(j)$ is the class of maps $\gamma\in C^{0}([0,1]\times B(j),H)$ such that $$\gamma(t,\psi)=\psi,\qquad\forall (t,\psi)\in [0,1]\times S(j) $$ and which are homotopic to the identity through a family of odd maps of the form (\ref{pgflow}).
If there holds 
\be\label{inequality}
\inf_{\psi\in N(j)}\mathcal{L}(\psi)=:b_{j}>a_{j}:=\sup_{\psi\in S(j)}\cL(\psi),
\ee
then $c_{j}\geq b_{j}$ and there exists a Cerami sequence $(\psi^{j}_{n})_{n\in\mathbb{N}}\subseteq H$, that is 
\begin{equation}\label{cerami}
 \begin{cases}
\cL(\psi^{j}_{n})\longrightarrow c_{j}\\
(1+\Vert\psi^{j}_{n}\Vert)d\cL(\psi^{j}_{n})\xrightarrow{H^{*}} 0 \qquad\mbox{as}\quad n\longrightarrow\infty
\end{cases}
\end{equation}
where $H^{*}$ is the dual space of $H$.

Moreover, if Cerami sequences are pre-compact, then $c_{j}$ is a critical value.
\end{thm}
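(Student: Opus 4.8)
The plan is to establish the three conclusions in turn, reading \eqref{inequality} as the geometric linking hypothesis that makes the min--max level \eqref{minmax} finite and forces it strictly above the value of $\cL$ on the boundary sphere $S(j)$. Throughout, the engine is the Leray--Schauder version of the Borsuk--Ulam theorem (Theorem~\ref{borsukulam}), which is precisely what allows the classical semi-definite fountain scheme to survive in the present strongly indefinite setting, where $\Lambda_{1,j}$ and $\Lambda_{2,j}$ in \eqref{pgflow} are both isomorphisms.

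First I would note that $\Gamma(j)$ is nonempty, since it contains the constant homotopy at the identity, so that $c_j$ is well defined. The heart of the matter is the \emph{intersection lemma}: for every admissible $\gamma$ one has $\gamma(1,B(j))\cap N(j)\neq\emptyset$. To prove it I would argue by contradiction. Write $H=H_1(j-1)\oplus H_2(j)$ as an orthogonal sum, with projections $P$ onto $H_1(j-1)$ and $Q$ onto $H_2(j)$, so that a vector lies in $H_2(j)$ exactly when its $P$-component vanishes, and observe that $H_1(j)=H_1(j-1)\oplus\R e_j$, with $H_1(j-1)$ of codimension one in $H_1(j)$. Since every $\gamma(1,\cdot)$ has, by \eqref{pgflow}, the form ``block isomorphism plus compact map'' and is odd and equal to the identity on $S(j)$, I would manufacture out of it an odd Leray--Schauder map $\Phi\colon S(j)\to H_1(j-1)$ whose zeros correspond exactly to those $\psi\in B(j)$ with $\gamma(1,\psi)\in N(j)$. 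If the intersection were empty, $\Phi$ would be nowhere zero on $S(j)=\partial\{\psi\in H_1(j):\Vert\psi\Vert<\rho_j\}$, contradicting Theorem~\ref{borsukulam} with ambient space $H_1(j)$ and codimension-one target $Y=H_1(j-1)$. The one delicate point is that the constraint $\Vert Q\gamma(1,\psi)\Vert=r_j$ cutting out $N(j)$ is \emph{even}, not odd; this is reconciled in the standard way by encoding the radial constraint through the block structure of the flow, so that the map fed to Borsuk--Ulam is genuinely odd and of the form $I+K$.

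Granting the intersection lemma, the bound $c_j\geq b_j$ is immediate: for each $\gamma$ there is $\psi\in B(j)$ with $\gamma(1,\psi)\in N(j)$, whence $\cL(\gamma(1,\psi))\geq b_j$ by \eqref{inequality}, so $\sup\cL(\gamma(1,B(j)))\geq b_j$, and taking the infimum over $\Gamma(j)$ gives the claim. The complementary inequality $\sup_{S(j)}\cL=a_j<b_j\leq c_j$ records that $\cL$ lies strictly below the min--max level on the boundary, a fact I would use to localize the deformation away from $S(j)$. For the existence of a Cerami sequence I would then proceed by contradiction: if there were none at level $c_j$, a quantitative deformation lemma adapted to the Cerami condition produces, for small $\varepsilon$ with $c_j-\varepsilon>a_j$, an odd deformation $\eta$ that preserves the ``isomorphism plus compact'' form \eqref{pgflow}, fixes the region $\{\cL\leq a_j\}\supseteq S(j)$, and pushes $\{\cL\leq c_j+\varepsilon\}$ into $\{\cL\leq c_j-\varepsilon\}$. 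Composing $\eta(1,\cdot)$ with a near-optimal $\gamma$ would yield a new element of $\Gamma(j)$ with $\sup\cL$ below $c_j$, contradicting the definition \eqref{minmax}. Finally, if Cerami sequences are pre-compact, a convergent subsequence of $(\psi^{j}_n)$ furnishes a point $\psi$ with $\cL(\psi)=c_j$ and $d\cL(\psi)=0$ by continuity, i.e. a critical point at level $c_j$.

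I expect the main obstacle to be the construction of the deformation $\eta$ inside the class $\Gamma(j)$. Because the functional is strongly indefinite, the naive steepest-descent flow is neither compact nor of the required Leray--Schauder form, so the deformation must be built from a pseudo-gradient of the special shape \eqref{pgflow} which simultaneously respects the $\Z_2$-symmetry (oddness), the splitting $X=X^{+}\oplus X^{-}$, and the Leray--Schauder structure, while realizing the $(1+\Vert\psi\Vert)$-weighted decrease characteristic of the Cerami setting. Verifying that the resulting flow keeps admissible maps admissible---and that the intersection lemma's auxiliary map is truly odd and of type $I+K$ despite the even radial constraint---is where the ``straightforward generalization'' of the fountain theorem actually has to be earned; both points are, however, by now classical and can be imported from \cite{rabinowitz,struwevariational,rothe,willem}.
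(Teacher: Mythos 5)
Your overall architecture matches the paper's: the intersection property via the Hilbert-space Borsuk--Ulam theorem gives $c_j\geq b_j$, a deformation argument localized away from $S(j)$ gives the critical sequence, and pre-compactness turns $c_j$ into a critical value. (The secondary difference --- you invoke a Cerami-adapted quantitative deformation lemma, while the paper first produces a Palais--Smale sequence by deformation and then upgrades it to a Cerami sequence by Ekeland's variational principle --- is harmless; both routes are standard.)

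The genuine gap is in your intersection lemma, which is the crux of the theorem. You propose to apply Theorem \ref{borsukulam} on $S(j)$, to an odd LS-map $\Phi:S(j)\to H_1(j-1)$ ``whose zeros correspond exactly to those $\psi\in B(j)$ with $\gamma(1,\psi)\in N(j)$.'' No such map is constructed, and none can be read off from $\gamma$ in any direct way: on $S(j)$ every admissible $\gamma(1,\cdot)$ equals the identity, so anything manufactured from its restriction to $S(j)$ carries no information about the interior of $B(j)$, whereas the intersection property is precisely a statement about interior points. Your proposed fix for the evenness of the radial constraint (``encoding it through the block structure of the flow'') is not the standard resolution and does not yield the map you need. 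The correct device --- the one the paper uses --- is to change the \emph{domain} of the Borsuk--Ulam application rather than the map: set $\mathcal{U}:=\{u\in B(j):\Vert\gamma(1,u)\Vert<r_j\}$. By oddness of $\gamma(1,\cdot)$ this is a bounded symmetric neighborhood of $0$ in $H_1(j)$; since $\gamma(1,\cdot)=\mathrm{id}$ on $S(j)$ and $r_j<\rho_j$, the closure of $\mathcal{U}$ avoids $S(j)$, so every $u\in\partial\mathcal{U}$ automatically satisfies $\Vert\gamma(1,u)\Vert=r_j$: the even constraint is absorbed into the geometry of $\partial\mathcal{U}$, and nothing even is ever fed to Borsuk--Ulam. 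It then remains to solve only the odd equation $(P\circ\gamma)(1,u)=0$ on $\partial\mathcal{U}$, with $P$ the projection onto $Y=H_1(j-1)$, a codimension-one subspace of $H_1(j)$; the form (\ref{pgflow}) of $\gamma(1,\cdot)$ allows one to rewrite this as $u+(P\circ\Lambda^{-1}(1,\cdot)\circ K)(1,u)=0$, i.e.\ identity plus compact, so Theorem \ref{borsukulam} applies and produces $u_0\in\partial\mathcal{U}$ with $\gamma(1,u_0)\in H_2(j)$ and $\Vert\gamma(1,u_0)\Vert=r_j$, that is $\gamma(1,u_0)\in N(j)$. Without this (or an equivalent) construction, your proof of $c_j\geq b_j$ --- and hence of the whole theorem --- does not go through.
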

\begin{proof}
The proof follows by a standard deformation argument (see \cite{rabinowitz, struwevariational}).  However, we quickly sketch the proof for the convenience of the reader. More details can be found in the mentioned references.

Fix $j\in\mathbb{N}$. We first show that 
\be\label{maggiore}
c_{j}\geq b_{j}>a_{j},
\ee
 where $c_{j}$ is the min-max value defined in (\ref{minmax}) and $b_{j}$ is as in (\ref{inequality}). To this aim, we need to prove the intersection property $$\gamma(1,B(j))\cap N(j)\neq\emptyset$$
for any $\gamma\in\Gamma(j)$. Since $\gamma$ is odd in the second variable, $\gamma(1,0)=0$ and the set
\be
\mathcal{U}=\{u\in B(j) : \Vert\gamma(1,u)\Vert<r\} 
\ee

is a bounded neighborhood of the origin such that $-\mathcal{U}=\mathcal{U}$. 

Let $P:H\longrightarrow Y:= \overline{span\{e_{k}\}}^{k=j-1}_{-\infty}$ be the projection. Consider the map $(P\circ \gamma)(1,*): \partial\mathcal{U}\longrightarrow Y$. We have to prove that the equation 
\be\label{intersection} 
(P\circ \gamma)(1,u)=0
\ee
admits a solution $u_{0}\in\partial\mathcal{U}$.

Recall that $ \gamma(1,*)$ is of the form (\ref{pgflow}). Then (\ref{intersection}) is equivalent to $$ u+(\underbrace{P\circ\Lambda^{-1}(1,*)\circ K)}_{\text{compact}}(1,u)=0$$
and the claim follows by the Borsuk-Ulam theorem (Theorem \ref{borsukulam}).

 We claim that there is a \textit{Palais-Smale sequence} at level $c_{j}$ (PS$_{c_{j}}$ sequence, for short), that is, there exists a sequence $(\psi^{j}_{n})_{n\in\mathbb{N}}\subseteq H$ such that there holds
\begin{equation}\label{ps}
 \begin{cases}
\cL(\psi^{j}_{n})\longrightarrow c_{j}\\
 d\cL(\psi^{j}_{n})\xrightarrow{H^{*}} 0 \qquad\mbox{as}\quad n\longrightarrow\infty.
\end{cases}
\end{equation}
If this is not the case, since $\cL$ is of class $C^{2}$ this implies that there exist $\delta,\varepsilon>0$ such that 
\be\label{farfromzero}
\Vert d\cL(\psi)\Vert \geq \delta>0\ee 
for $\psi\in\{c_{j}-\varepsilon\leq\cL\leq c_{j}+\varepsilon \}.$

By (\ref{minmax}) it follows that there exists $\gamma_{\varepsilon}\in\Gamma(j)$ such that $$\sup \cL(\gamma_{\varepsilon}(1,B(j)))\leq c_{j}+\varepsilon.$$

 Following the construction explained, for instance, in \cite{rabinowitz, struwevariational}, one can construct a suitable vector field (a \textit{pseudo-gradient vector field} for $\cL$) whose flow is as in (\ref{pgflow}) and such that $\frac{d}{dt}\cL(\eta(t,\psi))\leq0$, $\forall (t,\psi)\in [0,1]\times H$. Moreover, combining (\ref{maggiore},\ref{farfromzero}) and choosing $\varepsilon>0$ small, one can also obtain $\eta(t,\psi)=\psi$, for $\psi\notin\{ c_{j}-\varepsilon\leq\cL\leq c_{j}+\varepsilon\}$, and that $\eta(1,\cdot)$ maps $\{\cL\leq c_{j}+\varepsilon \}$ to $\{\cL\leq c_{j}-\varepsilon \}$. Combining those observations one gets that $\eta\circ\gamma_{\epsilon}\in\Gamma(j)$. But then
 \be
 \sup\cL(\eta\circ\gamma_{\epsilon}(1,B(j)))\leq c_{j}-\varepsilon,
 \ee
contradicting the definition of the min-max value (\ref{minmax}). This proves the existence of a PS$_{c_{j}}$-sequence $(\psi^{j}_{n})_{n}$. Moreover, applying Ekeland's variational principle \cite{ekeland} this can be promoted to a Cerami sequence, thus concluding the proof. 
\end{proof}

Our aim is to apply the fountain theorem to the functional $\cI$. First of all, we need to study the geometry of the functional $\cI$.
\begin{prop}\label{form}
The functional $$ \cI(\psi)=\frac{1}{2}\int_{\Omega}\langle(\cD-\omega)\psi,\psi\rangle -\frac{1}{4}\int_{\Omega}\cV(\psi)\vert\psi\vert^{2}$$ is of the form (\ref{standardform}). 
\end{prop}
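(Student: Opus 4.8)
The plan is to read off the two ingredients of the standard form (\ref{standardform}) directly from the two terms of $\cI$. For the quadratic part, (\ref{action2}) already exhibits $\frac{1}{2}\int_{\Omega}\langle(\cD-\omega)\psi,\psi\rangle=\frac{1}{2}\big(\Vert\psi^{+}\Vert_{X}^{2}-\Vert\psi^{-}\Vert_{X}^{2}\big)$, so I would take $L:=P^{+}-P^{-}$, the difference of the $X$-orthogonal projections onto $X^{\pm}$. Then $\frac{1}{2}\langle L\psi,\psi\rangle_{X}$ reproduces this term, and $L$ is manifestly bounded, self-adjoint, and diagonal in the eigenspinor basis $(\psi_{k})$, so it preserves each $H_{1}(j)$ and $H_{2}(j)$. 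It then only remains to set $F(\psi):=-\frac{1}{4}\int_{\Omega}\cV(\psi)\vert\psi\vert^{2}$ and to check that $dF$ is compact.

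First I would compute the differential. Since $\cV$ depends linearly on $\vert\psi\vert^{2}$ and $(-\Delta)^{-\frac{1}{2}}$ is self-adjoint, a routine calculation gives $dF(\psi)[\phi]=-\mathrm{Re}\int_{\Omega}\cV(\psi)\langle\psi,\phi\rangle$, so that $dF(\psi)$ is represented by the spinor $\cV(\psi)\psi$. To prove compactness I must show that $\psi_{n}\wto\psi$ in $X$ forces $dF(\psi_{n})\to dF(\psi)$ strongly in $X^{*}$.

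Here is where I would exploit the regularizing effect of $(-\Delta)^{-\frac{1}{2}}$. By the equivalence of the $X$-norm and the $H^{\frac{1}{2}}$-norm, $\psi_{n}\wto\psi$ in $X$ gives a bound in $H^{\frac{1}{2}}(\Omega,\C^{2})$, hence in $L^{4}$ through the continuous critical embedding, together with strong convergence in every $L^{p}$ with $p<4$ by the compact subcritical embeddings. In particular $\vert\psi_{n}\vert^{2}\to\vert\psi\vert^{2}$ in $L^{p}$ for some $p$ slightly below $2$; applying $(-\Delta)^{-\frac{1}{2}}$ lands this in a Sobolev space $W^{1,p}(\Omega)$ with one more derivative, and the two-dimensional embedding $W^{1,p}\hookrightarrow L^{q}$ then yields $\cV(\psi_{n})\to\cV(\psi)$ in $L^{q}$ for $q$ arbitrarily large. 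Splitting $\cV(\psi_{n})\psi_{n}-\cV(\psi)\psi=(\cV(\psi_{n})-\cV(\psi))\psi_{n}+\cV(\psi)(\psi_{n}-\psi)$ and estimating each term by H\"older against the test spinor $\phi$ (with exponents $q,4,4$, using $\Vert\psi_{n}\Vert_{L^{4}}\lesssim1$ and $\Vert\phi\Vert_{L^{4}}\lesssim\Vert\phi\Vert_{X}$), both contributions tend to zero uniformly over $\Vert\phi\Vert_{X}\le1$, which is precisely compactness of $dF$.

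The main obstacle is exactly that the nonlinearity is $4$-homogeneous, so the embedding one would naively need, $H^{\frac{1}{2}}\hookrightarrow L^{4}$, is the critical and non-compact one. The delicate point is therefore never to invoke compactness at the critical exponent: I would use only the compact subcritical embeddings $H^{\frac{1}{2}}\hookrightarrow L^{4-\delta}$ and recover the missing integrability from the derivative gained by $(-\Delta)^{-\frac{1}{2}}$ applied to $\vert\psi\vert^{2}$. Verifying that this smoothing buys exactly the room needed to close the H\"older estimates is the heart of the argument; identifying $L$ and computing $dF$ are routine.
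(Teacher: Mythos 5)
Your proposal is correct, and its skeleton coincides with the paper's: the paper's proof of Proposition \ref{form} likewise consists of reducing everything to the compactness of $dF$, which is Proposition \ref{homogeneous} in the appendix, proved there by the same add-and-subtract splitting of $\cV(\psi_{n})\psi_{n}-\cV(\psi)\psi$, H\"older estimates against a test spinor of $X$-norm at most one, the compact subcritical embeddings, and the derivative gained through $(-\Delta)^{-\frac{1}{2}}$; the identification of the linear part, which you make explicit as $L=P^{+}-P^{-}$, is left implicit in (\ref{action2}). The one genuinely different sub-step is how strong convergence of $\cV(\psi_{n})$ is obtained. The paper deduces $\vert\psi_{n}\vert^{2}\rightarrow\vert\psi\vert^{2}$ in $L^{\frac{3}{2}}$ via a Radon--Riesz argument (Lemma \ref{brezis} on uniform convexity, combined with convergence of the norms, Proposition \ref{convergence}) and then applies $(-\Delta)^{-\frac{1}{2}}:L^{\frac{3}{2}}\rightarrow W^{1,\frac{3}{2}}\hookrightarrow L^{2}$. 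You instead assert $\vert\psi_{n}\vert^{2}\rightarrow\vert\psi\vert^{2}$ in $L^{p}$ for $p$ slightly below $2$ directly; although you do not justify it, this is a one-line estimate, $\Vert\vert\psi_{n}\vert^{2}-\vert\psi\vert^{2}\Vert_{p}\leq\Vert\vert\psi_{n}\vert+\vert\psi\vert\Vert_{2p}\Vert\psi_{n}-\psi\Vert_{2p}\rightarrow0$ since $2p<4$, so your route is more elementary (no uniform convexity, no extraction of further subsequences) and yields the stronger conclusion $\cV(\psi_{n})\rightarrow\cV(\psi)$ in $L^{q}$ for every finite $q$, whereas the paper packages its version as two reusable auxiliary results. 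One slip should be fixed: the H\"older exponents $(q,4,4)$ are correct for the term $(\cV(\psi_{n})-\cV(\psi))\psi_{n}\overline{\varphi}$, but applied to the term $\cV(\psi)(\psi_{n}-\psi)\overline{\varphi}$ they would place the factor that must vanish at the critical exponent, $\Vert\psi_{n}-\psi\Vert_{4}$, which does not tend to zero --- precisely the pitfall you warn against. The remedy is immediate within your own setup: estimate that term with exponents $(4,2,4)$, i.e. by $\Vert\cV(\psi)\Vert_{4}\Vert\psi_{n}-\psi\Vert_{2}\Vert\varphi\Vert_{4}$, which is the analogue of the paper's estimate (\ref{uno}) with the roles of the two factors in the splitting exchanged; with this correction the argument closes.
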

\begin{proof}
The term involving the potential is 4-homogeneous, but we can avoid compactness issues related to the critical Sobolev embedding $H^{\frac{1}{2}}(\Omega,\C^{2})\hookrightarrow L^{4}(\Omega,\C^{2})$ thanks to the regularizing properties of $(-\Delta)^{-\frac{1}{2}}$, as shown in Proposition (\ref{homogeneous}).
\end{proof}

For each $j\geq1$, consider the splitting
\be
X=X_{1}(j)\oplus X_{2}(j)=\left({\overline{span\{\psi_{k}\}}^{j}_{k=-\infty}}\right)\oplus\left(\overline{span\{\psi_{k}\}}^{+\infty}_{k=j}\right)
\ee
where $(\psi_{k})_{k\in\Z}$ is an orthonormal basis of eigenspinors of $\cD$.
\begin{lemma}
Let $j\geq1$, there exists $\rho_{j}>0$ such that $\cI(\psi)\leq0$, for $\psi\in X_{1}(j)$ and $\Vert\psi\Vert\geq\rho_{j}$.
\end{lemma}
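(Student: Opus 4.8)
The plan is to prove the stronger statement that $\cI$ is anti-coercive on $X_1(j)$, i.e. $\cI(\psi)\to-\infty$ as $\|\psi\|\to\infty$ with $\psi\in X_1(j)$; the existence of $\rho_j$ is then immediate. The starting point is the decomposition $X_1(j)=X^-\oplus V_j$, where $V_j:=\mathrm{span}\{\psi_1,\dots,\psi_j\}=X_1(j)\cap X^+$ is finite-dimensional (this uses $\omega\in(-\lambda_1,\lambda_1)$, so that every eigenspinor with index $\le 0$ lies in $X^-$). Writing $\psi=\psi^++\psi^-$ accordingly and setting $Q(\psi):=\int_\Omega\cV(\psi)|\psi|^2$, formula (\ref{action2}) reads $\cI(\psi)=\tfrac12\|\psi^+\|^2-\tfrac12\|\psi^-\|^2-\tfrac14 Q(\psi)$. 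I will use repeatedly that $Q(\psi)=\sum_{n}\mu_n^{-1/2}|\langle|\psi|^2,e_n\rangle|^2\ge 0$, with equality if and only if $|\psi|^2\equiv 0$, i.e. if and only if $\psi=0$.

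I would argue by contradiction. If no $\rho_j$ works, there is a sequence $(\psi_n)\subset X_1(j)$ with $\|\psi_n\|\to\infty$ and $\cI(\psi_n)>0$. Since $Q\ge 0$, the inequality $\cI(\psi_n)>0$ forces $\|\psi_n^+\|>\|\psi_n^-\|$, whence $\|\psi_n^+\|\to\infty$ and the normalized sequence $v_n:=\psi_n/\|\psi_n\|$ satisfies $\|v_n^+\|^2\ge\tfrac12$. Because $V_j$ is finite-dimensional and $\|v_n^+\|\le 1$, after passing to a subsequence $v_n^+\to v^+$ strongly in $V_j$ with $\|v^+\|\ge 1/\sqrt2$, so $v^+\neq 0$; moreover $v_n\wto v$ weakly in $X$ for some $v$ whose $X^+$-component is exactly $v^+$ (the projection onto $X^+$ being weakly continuous). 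Dividing $\cI(\psi_n)>0$ by $\|\psi_n\|^4$ and using $\|v_n^+\|^2-\|v_n^-\|^2\le 1$ gives $0\le Q(v_n)<\|\psi_n\|^{-2}\to 0$, so $Q(v_n)\to 0$.

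It remains to derive a contradiction from $v_n\wto v$ with $v^+\neq 0$ and $Q(v_n)\to 0$. This is exactly the point where the regularizing effect of $(-\Delta)^{-\frac{1}{2}}$ enters, and I expect it to be the main obstacle. Writing $Q(\psi)=\|(-\Delta)^{-1/4}|\psi|^2\|_2^2$, the fact I need is the sequential weak continuity of $\psi\mapsto Q(\psi)$ on $X$, established in Proposition (\ref{homogeneous}): since $X\cong H^{\frac12}(\Omega,\C^2)$ embeds compactly into $L^{p}$ for every $p<4$, one has $v_n\to v$ strongly in $L^3$, hence $|v_n|^2\to|v|^2$ in $L^{3/2}$; the smoothing $(-\Delta)^{-1/4}:L^{3/2}\to L^{12/5}\hookrightarrow L^2(\Omega)$ then yields $(-\Delta)^{-1/4}|v_n|^2\to(-\Delta)^{-1/4}|v|^2$ in $L^2$, so $Q(v_n)\to Q(v)$. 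Combined with $Q(v_n)\to 0$ this gives $Q(v)=0$, thus $v=0$ and in particular $v^+=0$, contradicting $v^+\neq 0$. This contradiction proves anti-coercivity and hence the existence of $\rho_j$. The crucial step is the use of the subcritical (hence compact) embedding $X\hookrightarrow L^3$ in place of the critical $X\hookrightarrow L^4$, which is precisely what the four-homogeneity of $Q$ would otherwise obstruct, and is the heart of the argument.
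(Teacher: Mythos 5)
Your proof is correct, and its skeleton coincides with the paper's own argument: a contradiction/compactness argument on a normalized sequence in $X_{1}(j)=X^{-}\oplus V_{j}$, strong convergence of the finite-dimensional positive component (giving a nonzero limit there), and the non-degeneracy $\cQ(v)=0\Rightarrow v=0$ to reach a contradiction. There are, however, two genuine differences worth recording. First, the paper splits into the trivial case $\Vert\psi^{-}\Vert\geq\Vert\psi^{+}\Vert$ and the cone $\Vert\psi^{+}\Vert\geq\Vert\psi^{-}\Vert$, on which it proves the quantitative lower bound $\cQ(\psi)\geq C(j)\Vert\psi\Vert^{4}$ (its inequality (\ref{claim})) before concluding; you dispense with the case distinction by observing that $\cI(\psi_{n})>0$ together with $\cQ\geq 0$ already forces $\Vert\psi_{n}^{+}\Vert>\Vert\psi_{n}^{-}\Vert$, and you extract $\cQ(v_{n})\to 0$ directly from $\cI(\psi_{n})>0$ by $4$-homogeneity (your bound should read $\cQ(v_{n})<2\Vert\psi_{n}\Vert^{-2}$ rather than $\Vert\psi_{n}\Vert^{-2}$, which is immaterial). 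Second, and more substantively, at the key limit step the paper passes from $\cQ(\psi_{n})\to 0$ to $\cQ(\psi_{\infty})=0$ via weak lower semicontinuity, justified by asserting that $\cQ$ is ``continuous and convex''; the convexity of $\psi\mapsto\cQ(\psi)$ is true but far from obvious (it ultimately rests on the positivity-preserving property of $(-\Delta)^{-\frac{1}{2}}$ proved in the appendix, through monotonicity of the quadratic form on nonnegative densities composed with the pointwise-convex map $\psi\mapsto\vert\psi\vert^{2}$). You instead prove sequential weak continuity of $\cQ$ outright, using the compact embedding $X\hookrightarrow L^{3}$ and the smoothing bound $(-\Delta)^{-\frac{1}{4}}:L^{\frac{3}{2}}\to L^{\frac{12}{5}}\hookrightarrow L^{2}(\Omega)$ --- the same mechanism the paper itself uses in Proposition \ref{homogeneous} for the compactness of $dF$. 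This is self-contained, bypasses the convexity claim entirely, and gives the slightly stronger conclusion that $\cI$ is anti-coercive on $X_{1}(j)$, at the modest price of invoking fractional mapping properties of the Dirichlet Laplacian instead of an abstract semicontinuity argument.
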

\begin{proof}
Let $\psi\in X_{1}(j)$ be such that $\Vert\psi\Vert\geq\rho_{j}>0$. Recall that $$\psi=\psi^{-}+\psi^{+}\in Y:=X^{-}\oplus span\{e_{k}\}^{k=1}_{j}.$$

Suppose that
\be
\Vert\psi^{-}\Vert\geq\Vert\psi^{+}\Vert.
\ee
It is immediate from (\ref{action2}) that $\cI(\psi)\leq0$.

Now assume 
\be\label{assumption}
\Vert\psi^{+}\Vert\geq\Vert\psi^{-}\Vert.
\ee

We claim that there exists $C=C(j)>0$ such that 
\be\label{claim}
\cQ(\psi):=\int_{\Omega}\cV(\psi)\vert\psi\vert^{2}\geq C\Vert\psi\Vert^{4}
\ee
for all $\psi\in Y$ satisfying (\ref{assumption}).

Suppose the claim is false. Then arguing by contradiction and by the 4-homogeneity of $\cQ$, there exists a sequence $(\psi_{n})_{n\in\mathbb{N}}\subseteq Y$ satisfying (\ref{assumption}), and such that $\Vert\psi_{n}\Vert=1$ and 
$$\cQ(\psi_{n})\longrightarrow 0,\qquad \mbox{as}\quad n\rightarrow+\infty. $$
Notice that (\ref{assumption}) implies that 
\be\label{nonzero2}
\Vert\psi^{+}_{n}\Vert\geq\frac{1}{\sqrt{2}}
\ee
Up to subsequences, we can assume that there exists $\psi_{\infty}\in Y$ such that $\psi^{-}_{n}$ weakly converges to $\psi^{-}_{\infty}$, while $\psi^{+}_{n}$ strongly converges to $\psi^{+}_{\infty}$, the latter sequence lying in a finite-dimensional space. 
Thus there holds
\be\label{nonzero3}
\Vert\psi^{+}_{\infty}\Vert\geq\frac{1}{\sqrt{2}}.
\ee
Since $\cQ$ is continuous and convex it also is weakly lower semi-continuous, and then 
$$ \cQ(\psi_{\infty})=0.$$
This implies that $$\psi_{\infty}=\psi^{+}_{\infty}+\psi^{-}_{\infty}=0 $$ and thus 
\be\label{zero}
\psi^{+}_{\infty}=0
\ee
$\psi^{-}_{\infty}$ and $\psi^{+}_{\infty}$ being orthogonal, contradicting (\ref{nonzero3}). 

Then , given (\ref{claim}), we have
\be
\cI(\psi)\leq\Vert\psi^{+}\Vert-\Vert\psi^{-}\Vert-C\Vert\psi\Vert^{4}
\ee
for all $\psi\in Y$ such that (\ref{assumption}) holds. Thus $\cI(\psi)\leq0$, for $\rho_{j}>0$ large enough. 

\end{proof}

\begin{lemma}\label{diverge}
For $1\leq p<4$ define
$$\beta_{j,p}:=\sup\{\Vert\psi\Vert_{p} : \psi\in X_{2}(j),\Vert\psi\Vert=1\}. $$
Then $\beta_{j,p}\longrightarrow 0 $ as $j\rightarrow\infty$.
\end{lemma}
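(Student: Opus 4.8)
The plan is to run the standard monotonicity-plus-weak-compactness argument. First I would note that since $X_{2}(j+1)\subseteq X_{2}(j)$, the sequence $(\beta_{j,p})_{j}$ is non-increasing and bounded below by $0$, hence converges to some limit $\beta\geq0$; the goal is then to show $\beta=0$. To access the supremum I would choose, for each $j$, a near-maximizer $u_{j}\in X_{2}(j)$ with $\Vert u_{j}\Vert=1$ and $\Vert u_{j}\Vert_{p}\geq\beta_{j,p}-\frac1j$, so that in particular $\liminf_{j}\Vert u_{j}\Vert_{p}\geq\beta$.

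The heart of the argument is to prove that $u_{j}\wto0$ weakly in $X$. The sequence is bounded in $X$ by construction, so it suffices to test against a fixed $v\in X$. Writing $P_{\geq j}$ for the orthogonal projection onto $X_{2}(j)$ and using that the eigenspinors $(\psi_{k})_{k\in\Z}$ are orthogonal in $X$ while $u_{j}\in X_{2}(j)$ is orthogonal to every $\psi_{k}$ with $k<j$, one gets $\langle u_{j},v\rangle_{X}=\langle u_{j},P_{\geq j}v\rangle_{X}$. By Cauchy--Schwarz this is bounded by $\Vert P_{\geq j}v\Vert_{X}$, which is the tail of a convergent series and therefore tends to $0$ as $j\to\infty$. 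Hence $u_{j}\wto0$ in $X$.

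Next I would invoke the compact Sobolev embedding. Using the Remark identifying the $X$-norm with the $H^{\frac12}$-norm, the sequence $(u_{j})$ is bounded in $H^{\frac12}(\Omega,\C^{2})$; since $\Omega$ is a smooth bounded domain in dimension two, the embedding $H^{\frac12}(\Omega,\C^{2})\hookrightarrow L^{p}(\Omega,\C^{2})$ is compact for every $p<4$ (the critical exponent being $4$). Compactness upgrades the weak convergence $u_{j}\wto0$ to strong convergence $u_{j}\to0$ in $L^{p}$, i.e. $\Vert u_{j}\Vert_{p}\to0$. Combining this with $\liminf_{j}\Vert u_{j}\Vert_{p}\geq\beta$ forces $\beta=0$, which is exactly the claim $\beta_{j,p}\to0$.

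The main obstacle is the weak-convergence step: one must verify that a bounded sequence with $u_{j}\in X_{2}(j)$ necessarily converges weakly to zero, which rests precisely on the orthogonality of the eigenspinor basis in $X$ together with the fact that the starting index of the subspace $X_{2}(j)$ diverges. Once this is in place the subcritical compactness finishes the proof; I would emphasize that the argument genuinely breaks down at the endpoint $p=4$, where the embedding is only continuous and not compact.
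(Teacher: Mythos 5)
Your proof is correct and takes essentially the same route as the paper: pick near-maximizers in $X_{2}(j)$ of unit $X$-norm, show they converge weakly to zero, and use the compact embedding $H^{\frac{1}{2}}(\Omega,\C^{2})\hookrightarrow L^{p}(\Omega,\C^{2})$ for $p<4$ to upgrade this to strong $L^{p}$ convergence, forcing $\beta_{j,p}\to0$. The only difference is one of detail: you prove explicitly, via the orthogonality of the eigenspinor basis and the vanishing tail $\Vert P_{\geq j}v\Vert_{X}\to0$, that the weak limit is zero (a step the paper dismisses as ``evident''), and your use of the monotonicity of $\beta_{j,p}$ cleanly avoids the paper's passage to subsequences.
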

\begin{proof}
By definition, for each $j\geq 1$ there exists $\psi_{j}\in X_{2}(j)$ such that $\Vert\psi_{j}\Vert=1$ and $\frac{1}{2}\beta_{j,p}<\Vert\psi_{j}\Vert_{p}$. The compactness of the Sobolev embedding implies that, up to subsequences, $\psi_{j}\rightharpoonup\psi$ weakly in $X$ and $\psi_{j}\longrightarrow\psi$ strongly in $L^{p}(\Omega,\C^{2})$. It is evident that $\psi=0$. Then
$$ \frac{1}{2}\beta_{j,p}<\Vert\psi_{j}\Vert_{p}\longrightarrow0.$$
\end{proof}
The above result allows us to prove the following:
\begin{lemma}
There exists $r_{j}>0$ such that 
$$b_{j}:=\inf \{\cI(\psi) : \psi\in X_{2}(j), \Vert\psi\Vert=r_{j}\}\longrightarrow+\infty$$
as $j\longrightarrow+\infty$.
\end{lemma}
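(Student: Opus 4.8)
The plan is to exploit that, for $j$ large enough, the subspace $X_2(j)$ sits entirely inside the positive spectral subspace $X^{+}$ of $(\cD-\omega)$. Indeed, since $\lambda_k\to+\infty$, there is $j_0$ such that $\lambda_k-\omega>0$ for all $k\geq j\geq j_0$ (recall $\omega\in(-\lambda_1,\lambda_1)$), so that any $\psi\in X_2(j)$ has $\psi^{-}=0$. By (\ref{action2}) this gives, for such $j$,
\be
\cI(\psi)=\frac{1}{2}\Vert\psi\Vert^{2}-\frac{1}{4}\cQ(\psi),\qquad \psi\in X_2(j),
\ee
with $\cQ(\psi)=\int_\Omega\cV(\psi)\vert\psi\vert^{2}$. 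Thus the quadratic part is coercive and positive, and the whole point is to dominate the quartic term $\cQ$ by a norm strictly weaker than the critical one.

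First I would invoke the regularizing estimate for the self-consistent potential (Proposition \ref{homogeneous}): thanks to the smoothing of $(-\Delta)^{-\frac{1}{2}}$ one has a \emph{subcritical} bound $\cQ(\psi)\leq C\Vert\psi\Vert_{p}^{4}$ for some fixed $p\in[1,4)$ and a constant $C>0$ independent of $j$. Concretely, writing $\cV(\psi)=(-\Delta)^{-\frac12}\vert\psi\vert^{2}$ and using that in dimension two $(-\Delta)^{-\frac12}$ maps $L^{q}$ into $L^{r}$ with $\frac{1}{r}=\frac{1}{q}-\frac{1}{2}$, Hölder's inequality applied with the symmetric choice $q=r'=\frac43$ yields $\cQ(\psi)\lesssim\Vert\psi\Vert_{8/3}^{4}$, so one may take $p=\frac83<4$.

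Next I would feed in Lemma \ref{diverge}. By the very definition of $\beta_{j,p}$, every $\psi\in X_2(j)$ satisfies $\Vert\psi\Vert_{p}\leq\beta_{j,p}\Vert\psi\Vert$, whence $\cQ(\psi)\leq C\beta_{j,p}^{4}\Vert\psi\Vert^{4}$. For $j$ large $\beta_{j,p}>0$, and restricting to the sphere $\Vert\psi\Vert=r_j$ gives the elementary one-variable lower bound
\be
\cI(\psi)\geq\frac{1}{2}r_j^{2}-\frac{C}{4}\beta_{j,p}^{4}\,r_j^{4}.
\ee
I would then optimise the right-hand side in $r_j$: it is maximised at $r_j:=(C\beta_{j,p}^{4})^{-1/2}$, where the two terms balance and
\be
b_j\geq\frac{1}{4C\,\beta_{j,p}^{4}}.
\ee
Since Lemma \ref{diverge} gives $\beta_{j,p}\to0$ as $j\to\infty$, the right-hand side diverges, proving $b_j\to+\infty$. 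This choice also forces $r_j\to+\infty$; as the radius $\rho_j$ of the previous lemma may be taken as large as we wish, we can always arrange $0<r_j<\rho_j$, as the fountain geometry requires.

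The only genuinely delicate point is the subcritical estimate $\cQ(\psi)\leq C\Vert\psi\Vert_p^{4}$ with $p<4$: this is exactly where the regularizing effect of $(-\Delta)^{-\frac12}$ enters and lets us bypass the critical embedding $X\hookrightarrow L^{4}$, for which $\beta_{j,4}$ would \emph{not} tend to zero and the balancing argument would collapse. Everything else is the standard coercivity-plus-optimisation bookkeeping, together with the harmless observation that $X_2(j)\subseteq X^{+}$ for large $j$.
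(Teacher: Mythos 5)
Your proposal is correct and follows essentially the same route as the paper: a subcritical bound $\cQ(\psi)\leq C\Vert\psi\Vert_{p}^{4}$ with $p<4$ obtained from H\"older plus the smoothing of $(-\Delta)^{-\frac{1}{2}}$ (you take $p=\frac{8}{3}$ via $L^{\frac{4}{3}}\to L^{4}$, the paper takes $p=3$ via $L^{\frac{3}{2}}\to W^{1,\frac{3}{2}}\hookrightarrow L^{3}$), then Lemma \ref{diverge} and optimization of the radius $r_{j}\sim\beta_{j,p}^{-2}$. The only cosmetic difference is that you use $X_{2}(j)\subseteq X^{+}$ to write the quadratic part exactly as $\frac{1}{2}\Vert\psi\Vert^{2}$, whereas the paper keeps an extra $-\frac{1}{2}\Vert\psi\Vert_{2}^{2}$ term and absorbs it using $\beta_{j,2}^{2}\leq\frac{1}{2}$.
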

\begin{proof}
By the H\"{o}lder inequality, we get
\be\label{bound}
\int_{\Omega}\cV(\psi)\vert\psi\vert^{2}\leq\left(\int_{\Omega}\vert\psi\vert^{3}\right)^{\frac{2}{3}}\left(\int_{\Omega}\cV(\psi)^{3}\right)^{\frac{1}{3}}\leq C\Vert\psi\Vert^{4}_{3}.
\ee
Recall that $\cV(\psi):=(-\Delta)^{-\frac{1}{2}}(\vert\psi\vert^{2})$. Since $\vert\psi\vert^{2}\in L^{\frac{3}{2}}(\Omega,C^{2})$, and $(-\Delta)^{-\frac{1}{2}}$ sends $L^{\frac{3}{2}}(\Omega,\C^{2})$ into $W^{1,\frac{3}{2}}(\Omega,\C^{2})\hookrightarrow L^{3}(\Omega,\C^{2})$, we easily get (\ref{bound}).  

Take $\psi\in X_{2}(j)$ such that $\Vert\psi\Vert=r$. Then by (\ref{bound}) and Lemma \ref{diverge} we have

\be\label{control}
\begin{split}
\cI(\psi)&=\frac{1}{2}\int_{\Omega}\langle(\cD-\omega)\psi,\psi\rangle-\int_{\Omega}\cV(\psi)\vert\psi\vert^{2} \\ &\geq\frac{1}{2}\Vert\psi\Vert^{2}-\frac{1}{2}\Vert\psi\Vert^{2}_{2}-C\Vert\psi\Vert^{4}_{3}\\ &\geq \frac{1}{2}r^{2}-\frac{1}{2}r^{2}\beta^{2}_{j,2}-Cr^{4}\beta^{4}_{j,3}\\&\geq\frac{1}{4}r^{2}-C\beta^{4}_{j,3}r^{4}
\end{split}
\ee
where we used the fact that $\beta^{2}_{j,2}\leq\frac{1}{2}$. 

The function $r\mapsto \frac{1}{4}r^{2}-C\beta^{4}_{j,3}r^{4}$ attains its maximum at $r = (8C\beta^{4}_{j,3})^{-\frac{1}{2}}$. Then taking $r_{j}:=(8C\beta^{4}_{j,3})^{-\frac{1}{2}}$ we get $$b(j)\geq (64C\beta^{3}_{j,3})^{-1}\longrightarrow +\infty $$
and this concludes the proof.
\end{proof}
The above results allow us to apply the Fountain theorem (Theorem \ref{fountain}) to the functional $\cI$. We thus get the existence of a sequence of min-max values 
\be
c_{j}\longrightarrow+\infty, \qquad \mbox{as}\quad  j\rightarrow+\infty,
\ee

and, for each $j\in\mathbb{N}$, of a Cerami sequence $(\psi^{n}_{j})_{n\in\mathbb{N}}\in X$:
\be
\begin{cases}
\cI(\psi^{n}_{j})\longrightarrow c_{j}\\
(1+\Vert\psi^{n}_{j}\Vert)d\cI(\psi^{n}_{j})\xrightarrow{X^{*}} 0 \qquad\mbox{as}\quad n\longrightarrow\infty
\end{cases}
\ee

\begin{lemma}\label{precompact}
Cerami sequences for $\cI$ are pre-compact .
\end{lemma}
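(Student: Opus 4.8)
The plan is to show that any Cerami sequence $(\psi_n)\subseteq X$ at a level $c$ admits a strongly convergent subsequence. Write $\psi_n=\psi_n^{+}+\psi_n^{-}$ along the splitting $X=X^{+}\oplus X^{-}$ and abbreviate $A_n:=\Vert\psi_n^{+}\Vert^{2}$, $B_n:=\Vert\psi_n^{-}\Vert^{2}$, $Q_n:=\cQ(\psi_n)$. Since the potential term is $4$-homogeneous and $(-\Delta)^{-\frac12}$ is self-adjoint, one computes $d\cI(\psi)[\phi]=\langle\psi^{+},\phi\rangle_{X}-\langle\psi^{-},\phi\rangle_{X}-\int_{\Omega}\cV(\psi)\,\mathrm{Re}\,\langle\psi,\phi\rangle$, where the $X$-subspaces $X^{\pm}$ are $\langle\cdot,\cdot\rangle_{X}$-orthogonal; in particular $\cQ\geq0$ and $d\cI(\psi)[\psi]=\Vert\psi^{+}\Vert^{2}-\Vert\psi^{-}\Vert^{2}-\cQ(\psi)$. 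Testing the Cerami condition with $\psi_n$, which is legitimate because $(1+\Vert\psi_n\Vert)\,d\cI(\psi_n)\to0$ forces $d\cI(\psi_n)[\psi_n]\to0$, and combining with $\cI(\psi_n)\to c$ yields $A_n-B_n\to4c$ and $Q_n\to4c$; in particular $Q_n$ stays bounded. Testing instead with $\psi_n^{+}$ gives the relation $A_n=\int_{\Omega}\cV(\psi_n)\,\mathrm{Re}\,\langle\psi_n,\psi_n^{+}\rangle+o(1)$.

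First I would prove that $(\psi_n)$ is bounded, and this is the main obstacle: $\cI$ is strongly indefinite (both $X^{\pm}$ are infinite-dimensional), so the quadratic part provides no coercivity. Here the regularization of $(-\Delta)^{-\frac12}$ is decisive. Writing $(-\Delta)^{-\frac12}=(-\Delta)^{-\frac14}(-\Delta)^{-\frac14}$ one has $\cQ(\psi)=\Vert(-\Delta)^{-\frac14}\vert\psi\vert^{2}\Vert_{2}^{2}$, and since $(-\Delta)^{-\frac14}$ maps $L^{2}$ into $H^{\frac12}\hookrightarrow L^{3}$ one gets the key bound $\Vert\cV(\psi)\Vert_{3}\leq C\,\cQ(\psi)^{\frac12}$. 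Arguing by contradiction, suppose $\Vert\psi_n\Vert\to\infty$ and set $v_n:=\psi_n/\Vert\psi_n\Vert$. From $A_n-B_n\to4c$ and $A_n+B_n=\Vert\psi_n\Vert^{2}\to\infty$ one gets $\Vert v_n^{+}\Vert^{2}\to\tfrac12$. On the other hand, estimating $\int_{\Omega}\cV(\psi_n)\,\mathrm{Re}\,\langle\psi_n,\psi_n^{+}\rangle\leq\Vert\cV(\psi_n)\Vert_{3}\Vert\psi_n\Vert_{3}\Vert\psi_n^{+}\Vert_{3}\leq C\,Q_n^{\frac12}\Vert\psi_n\Vert_{3}\Vert\psi_n^{+}\Vert_{3}$ and rescaling, the relation above becomes $\Vert v_n^{+}\Vert^{2}\leq C\,Q_n^{\frac12}\Vert v_n\Vert_{3}\Vert v_n^{+}\Vert_{3}+o(1)$. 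Since $Q_n$ is bounded and, by the compact embedding $X\hookrightarrow L^{3}$ (subcritical because $3<4$), $\Vert v_n\Vert_{3}\to0$, the right-hand side vanishes, contradicting $\Vert v_n^{+}\Vert^{2}\to\tfrac12$. Hence $(\psi_n)$ is bounded.

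With boundedness in hand, up to a subsequence $\psi_n\rightharpoonup\psi$ in $X$, and since the spectral projections onto $X^{\pm}$ are bounded I have $\psi_n^{\pm}\rightharpoonup\psi^{\pm}$; the compact embedding $X\hookrightarrow L^{p}$, $p<4$, then gives $\psi_n\to\psi$ and $\psi_n^{\pm}\to\psi^{\pm}$ strongly in $L^{3}$. The nonlinear form $\phi\mapsto\int_{\Omega}\cV(\psi_n)\,\mathrm{Re}\,\langle\psi_n,\phi\rangle$ depends continuously on $\psi_n$ in the $L^{3}$ topology, again through $\Vert\cV(\psi_n)\Vert_{3}\leq C\,Q_n^{\frac12}$ together with H\"older. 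To upgrade weak to strong convergence I would test $d\cI(\psi_n)\to0$ with $\psi_n^{+}-\psi^{+}$ and with $\psi_n^{-}-\psi^{-}$. Using $\langle\psi_n^{+},\psi_n^{+}-\psi^{+}\rangle_{X}=\Vert\psi_n^{+}\Vert^{2}-\Vert\psi^{+}\Vert^{2}+o(1)$, valid by weak convergence, and that the nonlinear terms are $o(1)$ because $\Vert\psi_n^{\pm}-\psi^{\pm}\Vert_{3}\to0$, I obtain $\Vert\psi_n^{+}\Vert\to\Vert\psi^{+}\Vert$ and $\Vert\psi_n^{-}\Vert\to\Vert\psi^{-}\Vert$. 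Norm convergence together with weak convergence in the Hilbert space $X$ yields $\psi_n^{\pm}\to\psi^{\pm}$ strongly, hence $\psi_n\to\psi$ in $X$, proving pre-compactness.

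The only genuinely delicate point is the boundedness step, where strong indefiniteness rules out the usual coercivity argument; the subcritical, regularized structure of the nonlinearity, encapsulated in $\Vert\cV(\psi)\Vert_{3}\leq C\,\cQ(\psi)^{\frac12}$ and in the compactness of $X\hookrightarrow L^{3}$, is exactly what makes both the contradiction and the final strong-convergence argument go through.
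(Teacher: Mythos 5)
Your second half (strong convergence of a bounded Cerami sequence) is correct and is essentially the paper's own argument: test $d\cI(\psi_n)\to0$ against $\psi_n^{\pm}-\psi^{\pm}$, kill the nonlinear term with H\"older plus the compact embedding, and upgrade weak convergence to strong convergence via norm convergence in the Hilbert space $X$. The problem is in the boundedness step, which you yourself identify as the crux, and there the argument has a genuine gap. You claim that ``by the compact embedding $X\hookrightarrow L^{3}$, $\Vert v_n\Vert_3\to0$'' for the normalized sequence $v_n=\psi_n/\Vert\psi_n\Vert$. Compactness of the embedding does not give this: it only gives that, along a subsequence, $v_n\to v$ strongly in $L^3$, where $v$ is the weak limit of $(v_n)$ in $X$, and nothing you wrote shows $v=0$. (A constant sequence $v_n\equiv v$ with $\Vert v\Vert=1$ is bounded in $X$ and its $L^3$-norm certainly does not vanish; the fact that $(\psi_n)$ is a Cerami sequence is never used in your justification of this step.) Since the contradiction rests entirely on the vanishing of the right-hand side $C\,Q_n^{1/2}\Vert v_n\Vert_3\Vert v_n^{+}\Vert_3$, the proof is incomplete as written.

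The gap is reparable with ingredients you already have. By $4$-homogeneity, $\cQ(v_n)=Q_n/\Vert\psi_n\Vert^{4}\to0$, since $Q_n$ is bounded while $\Vert\psi_n\Vert\to\infty$. Along a subsequence $v_n\to v$ strongly in $L^3$; the functional $\cQ$ is continuous under strong $L^3$ convergence (because $|v_n|^2\to|v|^2$ in $L^{3/2}$ and $(-\Delta)^{-\frac12}:L^{\frac32}\to W^{1,\frac32}\hookrightarrow L^{3}$), hence $\cQ(v)=\lim_n \cQ(v_n)=0$; and $\cQ(v)=\Vert(-\Delta)^{-\frac14}|v|^2\Vert_2^2=0$ forces $|v|^2\equiv0$, i.e.\ $v=0$. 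This yields $\Vert v_n\Vert_3\to0$ along the subsequence and your contradiction then goes through; note that this continuity-plus-definiteness argument for $\cQ$ is precisely the one the paper uses in its geometry lemma for $X_1(j)$. It is also worth comparing with the paper's own boundedness proof, which is direct rather than by contradiction: from the boundedness of $Q_n=\Vert\cV(\psi_n)\Vert^2_{\mathring{H}^{\frac12}}$ it deduces that $\cV(\psi_n)$ is bounded in $L^4$ and $\cV(\psi_n)|\psi_n|^2$ is bounded in $L^1$, writes $\cV(\psi_n)|\psi_n|=\left(\cV(\psi_n)|\psi_n|^2\right)^{\frac12}\left(\cV(\psi_n)\right)^{\frac12}$ to conclude that $\cV(\psi_n)\psi_n$ is bounded in $L^{\frac85}$, and then reads the $H^{\frac12}$-bound on $\psi_n$ off the equation $\psi_n=(\cD-\omega)^{-1}(\cV(\psi_n)\psi_n)+o(1)$ via the regularity $(\cD-\omega)^{-1}:L^{\frac85}\to W^{1,\frac85}\hookrightarrow H^{\frac12}$. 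Either route works once your missing step is filled in.
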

\begin{proof}
Let $(\psi_{n})\subseteq X$ be an arbitrary Cerami sequence for $\cI$. 

Then
\be\label{cerami}
\begin{cases}
\cI(\psi_{n})\longrightarrow c\\
(1+\Vert\psi_{n}\Vert)(\cD\psi_{n}-\cV(\psi_{n})\psi_{n})\xrightarrow{X^{*}} 0 \qquad\mbox{as}\quad n\longrightarrow\infty
\end{cases}
\ee
for some $c>0$.

The second condition in (\ref{cerami}) implies that 
\be\label{strong}
\int_{\Omega}\langle\cD\psi_{n},\psi_{n}\rangle-\int_{\Omega}\cV(\psi_{n})\vert\psi_{n}\vert^{2}\longrightarrow 0.
\ee
Combining (\ref{strong}) and the first line in (\ref{cerami}) one gets 
\be\label{onehalf}
\Vert\cV(\psi_{n})\Vert^{2}_{\mathring{H}^{\frac{1}{2}}}=\int_{\Omega}\vert(-\Delta)^{\frac{1}{4}}\cV(\psi_{n})\vert^{2}=\int_{\Omega}\cV(\psi_{n})\vert\psi_{n}\vert^{2}\longrightarrow 2c.
\ee
By the Sobolev embedding $(\cV(\psi_{n}))_{n\in\mathbb{N}}$ is thus bounded in $L^{4}$. Moreover, since $(-\Delta)^{-\frac{1}{2}}$ is positivity-preserving (see section \ref{appendix}), (\ref{onehalf}) implies that $(\cV(\psi_{n})\vert\psi_{n}\vert^{2})_{n\in\mathbb{N}}$ is bounded in $L^{1}$. 

By the above remarks, writing 
\be\label{product}
\cV(\psi_{n})\vert\psi_{n}\vert=\underbrace{\left(\cV(\psi_{n})\vert\psi_{n}\vert^{2}\right)^{\frac{1}{2}}}_{L^{2}-bounded}\underbrace{\left(\cV(\psi_{n})\right)^{\frac{1}{2}}}_{L^{8}-bounded} 
\ee
and by the H\"{o}lder inequality, we easily get that $(\cV(\psi_{n})\psi_{n})_{n\in\mathbb{N}}$ is bounded in $L^{\frac{8}{5}}$. The second line of (\ref{cerami}) gives 
\be\label{bootstrap}
\psi_{n}=\psi^{1}_{n}+\psi^{2}_{n}:=(\cD-\omega)^{-1}(\cV(\psi_{n})\psi_{n})+o(1),\qquad\mbox{in}\quad H^{\frac{1}{2}}(\Omega,\C^{2})
\ee
It is immediate to see that $(\psi^{1}_{n})_{n\in\mathbb{N}}$ is bounded in $W^{1,\frac{8}{5}}\hookrightarrow H^{\frac{1}{2}}$, and thus $(\psi_{n})_{n\in\mathbb{N}}$ is bounded in $H^{\frac{1}{2}}$. 

Up to subsequences, there exists $\psi_{\infty}\in X$ such that $\psi_{n}\rightharpoonup\psi_{\infty}$ weakly in $X$ and $\psi_{n}\rightarrow\psi_{\infty}$ strongly in $L^{p}$ for all $1\leq p<4$.

Since $(\psi_{n})_{n\in\mathbb{N}}$ is a Cerami sequence, there holds
\be\label{opiccolo}
o(1)=\langle d\cI(\psi_{n}),\psi^{+}_{n}-\psi^{+}_{\infty}\rangle=\int_{\Omega}\langle\cD\psi^{+}_{n},\psi^{+}_{n}-\psi^{+}_{\infty}\rangle-\int_{\Omega}\cV(\psi_{n})\langle\psi_{n},\psi^{+}_{n}-\psi^{+}_{\infty}\rangle.
\ee
Moreover, the H\"{o}lder inequality gives
\be\label{opiccolo2}
\begin{split}
\left\vert\int_{\Omega}\cV(\psi_{n})\langle\psi_{n},\psi^{+}_{n}-\psi^{+}_{n}\rangle \right\vert&\leq\int_{\Omega}\cV(\psi_{n})\vert\psi_{n}\vert\vert\psi^{+}_{n}-\psi^{+}_{n}\vert \\ &\leq\Vert\cV(\psi_{n})\vert\psi_{n}\vert\Vert_{2}\Vert\psi^{+}_{n}-\psi^{+}_{\infty}\Vert_{2}\\ &\leq\Vert\cV(\psi_{n})\Vert_{6}\Vert\psi_{n}\Vert_{3}\Vert\psi^{+}_{n}-\psi^{+}_{n}\Vert_{2}\\&\leq C \Vert\psi^{+}_{n}-\psi^{+}_{n}\Vert_{2}
\end{split}
\ee
where in the last line we used the fact that $(\psi_{n})$ is bounded in $H^{\frac{1}{2}}\hookrightarrow L^{3}$ and that $(\cV(\psi_{n}))_{n\in\mathbb{N}}$ is $L^{6}$-bounded. Combining (\ref{opiccolo}) and (\ref{opiccolo2}) we get
\be\label{opiccolo3}
\int_{\Omega}\langle\cD\psi^{+}_{n},\psi^{+}_{n}-\psi^{+}_{\infty}\rangle=o(1)
\ee
On the other hand, for any $\eta^{+}\in X^{+}$, there holds
\be\label{eigbound}
\int_{\Omega}\langle\cD\eta^{+},\eta^{+}\rangle\geq(1+(\lambda_{1})^{-1})\Vert\eta^{+}\Vert^{2}
\ee
as it can be easily checked. By (\ref{eigbound}) and (\ref{opiccolo3}) we thus obtain
\be
\Vert\psi^{+}_{n}-\psi^{+}_{\infty}\Vert^{2}\leq C\int_{\Omega}\langle\cD(\psi^{+}_{n}-\psi^{+}_{\infty}),\psi^{+}_{n}-\psi^{+}_{\infty}\rangle=o(1).
\ee
An analogous argument gives 
\be
\Vert\psi^{-}_{n}-\psi^{-}_{\infty}\Vert^{2}=o(1)
\ee
thus proving the pre-compactness of Cerami sequences.
\end{proof}
Our main theorem (Theorem \ref{main}) is thus proved, as the regularity of solutions follows by standard bootstrap techniques, exploiting the regularization property of $(-\Delta)^{-\frac{1}{2}}$. 
\section{Auxiliary results}\label{appendix}
This section contains some auxiliary results used in the proof of our main theorem.

\noindent\textbf{Compactness of dF.}
\begin{lemma}\label{brezis}
Let $(Y,\Vert\cdot\Vert_{Y})$ be a uniformly convex Banach space and consider a sequence $(y_{n})_{n\in\mathbb{N}}\subseteq Y$. Suppose that $y_{n}\rightharpoonup y$ weakly in $Y$ and $\Vert y_{n}\Vert_{Y}\rightarrow\Vert y\Vert_{Y}$. 

Then $y_{n}\longrightarrow y$ strongly in $Y$, as $n\rightarrow+\infty$.
\end{lemma}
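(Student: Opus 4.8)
The plan is to prove this classical consequence of uniform convexity (the Radon--Riesz property) by reducing to the case of unit vectors and then exploiting a norming functional together with the defining inequality of uniform convexity. First I would dispose of the trivial case $y=0$: here the hypothesis $\Vert y_{n}\Vert_{Y}\to\Vert y\Vert_{Y}=0$ already gives $y_{n}\to 0$ strongly, and no convexity is needed.

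Assume now $y\neq 0$. For $n$ large enough $\Vert y_{n}\Vert_{Y}\neq 0$, so I can set $u_{n}:=y_{n}/\Vert y_{n}\Vert_{Y}$ and $u:=y/\Vert y\Vert_{Y}$. Since $\Vert y_{n}\Vert_{Y}\to\Vert y\Vert_{Y}\neq 0$, scalar rescaling preserves weak convergence: for every $g\in Y^{*}$ one has $g(u_{n})=\Vert y_{n}\Vert_{Y}^{-1}g(y_{n})\to\Vert y\Vert_{Y}^{-1}g(y)=g(u)$, so $u_{n}\rightharpoonup u$, while $\Vert u_{n}\Vert_{Y}=\Vert u\Vert_{Y}=1$. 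By the Hahn--Banach theorem I choose a norming functional $f\in Y^{*}$ with $\Vert f\Vert_{Y^{*}}=1$ and $f(u)=1$. The crucial step is to show that the midpoints converge in norm to the unit sphere. Weak convergence gives $f(u_{n})\to f(u)=1$, so
\be
f\Bigl(\frac{u_{n}+u}{2}\Bigr)=\frac{f(u_{n})+f(u)}{2}\longrightarrow 1,
\ee
and since $\vert f(v)\vert\leq\Vert v\Vert_{Y}$ this forces $\liminf_{n}\Vert (u_{n}+u)/2\Vert_{Y}\geq 1$. On the other hand the triangle inequality yields $\Vert (u_{n}+u)/2\Vert_{Y}\leq\tfrac{1}{2}(\Vert u_{n}\Vert_{Y}+\Vert u\Vert_{Y})=1$, so altogether $\Vert (u_{n}+u)/2\Vert_{Y}\to 1$.

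Finally I would invoke uniform convexity in contrapositive form: if $\Vert u_{n}-u\Vert_{Y}\not\to 0$, then some subsequence would satisfy $\Vert u_{n_{k}}-u\Vert_{Y}\geq\eps>0$, whence the modulus of convexity would give $\Vert (u_{n_{k}}+u)/2\Vert_{Y}\leq 1-\delta$ for some $\delta=\delta(\eps)>0$, contradicting the limit just established. Therefore $u_{n}\to u$ strongly, and multiplying back by the convergent scalars gives $y_{n}=\Vert y_{n}\Vert_{Y}\,u_{n}\to\Vert y\Vert_{Y}\,u=y$ strongly in $Y$. The only genuinely delicate point is the midpoint estimate, where the norming functional and the triangle inequality must be combined; everything else is routine, and the role of uniform convexity is precisely to upgrade the norm convergence of the midpoints into strong convergence of the sequence itself.
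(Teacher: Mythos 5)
Your proof is correct. Note that the paper itself does not prove this lemma at all: it simply defers to the cited reference (Brezis's functional analysis text), where this statement appears as the classical Radon--Riesz property of uniformly convex spaces. Your argument is essentially the canonical one from that reference: reduce to unit vectors, show the midpoints $\frac{u_{n}+u}{2}$ have norms tending to $1$, and invoke uniform convexity in contrapositive form. The only cosmetic difference is that the textbook proof typically gets the lower bound $\liminf_{n}\Vert\frac{u_{n}+u}{2}\Vert_{Y}\geq 1$ from weak lower semicontinuity of the norm, whereas you make the underlying Hahn--Banach argument explicit via a norming functional $f$ with $f(u)=\Vert f\Vert_{Y^{*}}=1$; these are equivalent, and all your steps (the rescaling preserving weak convergence, the subsequence contradiction, and the final passage from $u_{n}\to u$ back to $y_{n}\to y$) are sound.
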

See \cite{functional} for a proof. The above lemma allows us to prove the following
\begin{prop}\label{convergence}
Let $(\psi_{n})_{n\in\mathbb{N}}\subseteq X$ be a sequence such that $\psi_{n}\rightarrow \psi\in X$ strongly in $L^{p}$, for all $1\leq p< 4$.

Then, up to a subsequence $\vert\psi_{n}\vert^{2}\rightarrow\vert\psi\vert^{2}$ strongly in $L^{\frac{3}{2}}$, as $n\rightarrow+\infty$.
\end{prop}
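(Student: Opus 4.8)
The plan is to apply the uniform convexity criterion of Lemma~\ref{brezis} with $Y = L^{3/2}(\Omega)$, which is uniformly convex since $1 < 3/2 < \infty$. Writing $f_n := |\psi_n|^2$ and $f := |\psi|^2$, I must verify the two hypotheses of that lemma: the weak convergence $f_n \rightharpoonup f$ in $L^{3/2}$, and the convergence of norms $\Vert f_n\Vert_{3/2} \to \Vert f\Vert_{3/2}$.

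Convergence of the norms is immediate. Taking $p = 3 < 4$ in the hypothesis gives $\psi_n \to \psi$ strongly in $L^{3}(\Omega,\C^{2})$. Since $\Vert f_n\Vert_{3/2}^{3/2} = \int_\Omega |\psi_n|^{3} = \Vert\psi_n\Vert_{3}^{3}$ and likewise $\Vert f\Vert_{3/2}^{3/2} = \Vert\psi\Vert_{3}^{3}$, continuity of the $L^{3}$-norm under strong convergence yields $\Vert f_n\Vert_{3/2} \to \Vert f\Vert_{3/2}$. In particular $(f_n)$ is bounded in the reflexive space $L^{3/2}$.

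For the weak convergence I would pass to a subsequence. Strong $L^{3}$ convergence allows us to extract a subsequence, not relabeled, with $\psi_n \to \psi$ almost everywhere on $\Omega$, whence $f_n \to f$ a.e. Combined with the $L^{3/2}$-boundedness established above, the standard fact that an a.e.\ convergent, norm-bounded sequence in a reflexive space $L^{q}$ with $1 < q < \infty$ converges weakly to its pointwise limit gives $f_n \rightharpoonup f$ in $L^{3/2}$. The weak limit is forced to coincide with the a.e.\ limit: any weakly convergent subsequence of $(f_n)$ has its limit identified by testing against bounded measurable functions and invoking Egorov's theorem, so the limit must be $f$, and hence the whole (sub)sequence converges weakly to $f$.

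With both hypotheses in hand, Lemma~\ref{brezis} applies and yields $f_n \to f$ strongly in $L^{3/2}$ along the extracted subsequence, which is the claim. I expect the only delicate point to be the identification of the weak limit in the third step; the norm convergence is a direct computation with $L^{p}$-norms, and the uniform convexity of $L^{3/2}$ is classical.
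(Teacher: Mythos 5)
Your proof is correct and follows essentially the same route as the paper: the same norm identity $\Vert\,|\psi_n|^2\Vert_{3/2}=\Vert\psi_n\Vert_3^2$ obtained from strong $L^3$ convergence, weak convergence of $|\psi_n|^2$ to $|\psi|^2$ in $L^{3/2}$, and then Lemma~\ref{brezis} (uniform convexity, i.e.\ the Radon--Riesz property). The only difference is that you spell out the identification of the weak limit via a.e.\ convergence of a subsequence, a point the paper's proof leaves implicit.
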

\begin{proof}
We have 
\be
\Vert\vert\psi_{n}\vert^{2}\Vert_{\frac{3}{2}}=\Vert\psi_{n}\Vert^{2}_{3}\longrightarrow\Vert\psi\Vert^{2}_{3}=\Vert\vert\psi\vert^{2}\Vert_{\frac{3}{2}}
\ee
as $n\rightarrow+\infty$, since $\psi_{n}\rightarrow\psi$ strongly in $L^{3}$.

Moreover, it is easy to see that 
\be
\Vert\vert\psi_{n}\vert^{2}\Vert_{\frac{3}{2}}=\Vert\psi_{n}\Vert^{2}_{3}\leq C
\ee
and thus, up to a subsequence, $\vert\psi_{n}\vert^{2}\rightharpoonup\vert\psi\vert^{2}$ weakly in $L^{\frac{3}{2}}$.

Then the claim follows by Lemma (\ref{brezis}), $L^{p}$ spaces being uniformly convex for $1<p<+\infty$ (see,e.g. \cite{functional}).
.\end{proof}

Consider the map $F:X\longrightarrow \R$, defined as 
\be 
F(\psi):=\frac{1}{4}\int_{\Omega}\cV(\psi)\vert\psi\vert^{2}
\ee
It can be easily seen that the differential $dF:X\longrightarrow X^{*}$ acts as follows:
\be
\langle dF(\psi),\varphi\rangle_{X^{*}\times X}=\int_{\Omega}\cV(\psi)\Re(\psi\overline{\varphi}),\qquad \forall \psi,\varphi\in X,
\ee
where $\Re(\cdot)$ denotes the real part of a complex number.
\begin{prop}\label{homogeneous}
The differential $dF$ is compact.
\end{prop}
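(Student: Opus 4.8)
The plan is to prove that $dF$ is \emph{completely continuous}, i.e. that it sends weakly convergent sequences in $X$ to strongly convergent sequences in $X^{*}$. Since $X$ is a Hilbert space, hence reflexive, this property is equivalent to $dF$ mapping bounded sets to relatively compact ones, which is the compactness required by the Fountain theorem. So I would start from an arbitrary weakly convergent sequence $\psi_{n}\rightharpoonup\psi$ in $X$ and show that $dF(\psi_{n})\to dF(\psi)$ in $X^{*}$ along a subsequence; since the weak limit does not depend on the subsequence, a routine subsequence argument then upgrades this to convergence of the full sequence.

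First I would use the equivalence of the $X$-norm and the $H^{\frac{1}{2}}$-norm together with the compactness of the Sobolev embedding $H^{\frac{1}{2}}(\Omega,\C^{2})\hookrightarrow L^{p}(\Omega,\C^{2})$ for $1\leq p<4$ to pass to a subsequence along which $\psi_{n}\to\psi$ strongly in $L^{p}$ for every such $p$. This places us exactly in the hypotheses of Proposition \ref{convergence}, so (along a further subsequence) $\vert\psi_{n}\vert^{2}\to\vert\psi\vert^{2}$ strongly in $L^{\frac{3}{2}}$. Invoking the continuity of $(-\Delta)^{-\frac{1}{2}}:L^{\frac{3}{2}}(\Omega)\to W^{1,\frac{3}{2}}(\Omega)$ and the two-dimensional Sobolev embedding $W^{1,\frac{3}{2}}(\Omega)\hookrightarrow L^{6}(\Omega)$, I would then conclude that $\cV(\psi_{n})=(-\Delta)^{-\frac{1}{2}}(\vert\psi_{n}\vert^{2})\to\cV(\psi)$ strongly in $L^{6}(\Omega)$.

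Next I would estimate $\Vert dF(\psi_{n})-dF(\psi)\Vert_{X^{*}}$ directly. Using the explicit formula for $dF$, for $\varphi\in X$ I split
\[
\langle dF(\psi_{n})-dF(\psi),\varphi\rangle=\int_{\Omega}\big(\cV(\psi_{n})-\cV(\psi)\big)\Re(\psi_{n}\overline{\varphi})+\int_{\Omega}\cV(\psi)\,\Re\big((\psi_{n}-\psi)\overline{\varphi}\big).
\]
To both terms I apply H\"{o}lder's inequality with exponents $(6,3,2)$: the first is bounded by $\Vert\cV(\psi_{n})-\cV(\psi)\Vert_{6}\,\Vert\psi_{n}\Vert_{3}\,\Vert\varphi\Vert_{2}$ and the second by $\Vert\cV(\psi)\Vert_{6}\,\Vert\psi_{n}-\psi\Vert_{3}\,\Vert\varphi\Vert_{2}$. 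Since $\Vert\varphi\Vert_{2}\leq C\Vert\varphi\Vert_{X}$ by the embedding $X\hookrightarrow L^{2}$, while $(\psi_{n})$ is $L^{3}$-bounded and $\cV(\psi)$ is a fixed $L^{6}$ function, taking the supremum over $\Vert\varphi\Vert_{X}\leq1$ yields
\[
\Vert dF(\psi_{n})-dF(\psi)\Vert_{X^{*}}\leq C\big(\Vert\cV(\psi_{n})-\cV(\psi)\Vert_{6}+\Vert\psi_{n}-\psi\Vert_{3}\big)\longrightarrow0
\]
by the convergences established above, which is the desired complete continuity.

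The computation is essentially routine once the strong convergence of $\cV(\psi_{n})$ is secured, so the only delicate point, and the main obstacle, is precisely the regularity gain encoded in Proposition \ref{convergence} combined with the continuity of $(-\Delta)^{-\frac{1}{2}}$: this is where the regularizing effect of the half-Laplacian is exploited to neutralize the critical $4$-homogeneity, and it is what lets the H\"{o}lder bookkeeping close using only the subcritical exponents $2$ and $3$, both controlled by the $X$-norm through the embedding $X\hookrightarrow L^{p}$, $p<4$. I would watch the exponent arithmetic carefully ($\tfrac16+\tfrac13+\tfrac12=1$, and the two-dimensional Sobolev exponent $(\tfrac32)^{*}=6$) to ensure every factor lands in a space controlled either by the boundedness of $(\psi_{n})$ or by the $X$-norm of the test spinor.
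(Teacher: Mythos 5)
Your proof is correct and follows essentially the same route as the paper's: pass to a subsequence converging strongly in $L^{p}$ for $p<4$ via the compact embedding, invoke Proposition \ref{convergence} together with the regularization $(-\Delta)^{-\frac{1}{2}}:L^{\frac{3}{2}}\to W^{1,\frac{3}{2}}$, and then close a two-term splitting of $dF(\psi_{n})-dF(\psi)$ by H\"{o}lder, uniformly in the test spinor. The only differences are cosmetic: you group the difference with $\cV$ rather than with $\psi$ and use exponents $(6,3,2)$ where the paper uses $(4,4,2)$, which has the minor advantage of avoiding the critical $L^{4}$ norm of $\varphi$ and $\psi$ altogether.
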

\begin{proof}
Let $(\psi_{n})_{n\in\mathbb{N}}\subseteq X$ be a bounded sequence. Then the compactness of the Sobolev embedding $H^{\frac{1}{2}}(\Omega,\C^{2})\hookrightarrow L^{p}(\Omega,\C^{2})$, for $1\leq p<4$, implies that, up to subsequences, $\psi_{n}\rightarrow\psi\in X$, strongly in $L^{p}$. 

Take $\varphi\in X$ with $\Vert\varphi\Vert\leq 1$. We then have
\be\label{compact}
\begin{split}
\left\vert\int_{\Omega}\left(\cV(\psi_{n})\psi_{n}-\cV(\psi)\psi\right)\overline{\varphi} \right\vert&\leq \int_{\Omega}\left\vert (\cV(\psi_{n})\psi_{n}-\cV(\psi_{n})\psi)\overline{\varphi} \right\vert \\&+\int_{\Omega}\left\vert (\cV(\psi_{n})\psi-\cV(\psi)\psi)\overline{\varphi} \right\vert
\end{split}
\ee
We estimate the first term in the r.h.s. as follows. 

Applying the Cauchy-Schwarz inequality we get
\be\label{uno}
\begin{split}
\int_{\Omega}\left\vert (\cV(\psi_{n})\psi_{n}-\cV(\psi_{n})\psi)\overline{\varphi} \right\vert&\leq\left(\int_{\Omega}\vert \cV(\psi_{n})\varphi\vert^{2}\right)^{\frac{1}{2}}\left(\int_{\Omega}\vert\psi_{n}-\psi\vert^{2}\right)^{\frac{1}{2}}\\&\leq\Vert\varphi\Vert_{4}\Vert \cV(\psi_{n})\Vert_{4}\Vert\psi_{n}-\psi\Vert_{2}\\&\leq C\Vert\psi_{n}-\psi\Vert_{2}\longrightarrow0
\end{split}
\ee
for $n\rightarrow+\infty$, using the fact that $(-\Delta)^{-\frac{1}{2}}$ maps continuously $L^{2}$ to $H^{1}\hookrightarrow L^{4}$, and $\cV(\psi)=(-\Delta)^{-\frac{1}{2}}\vert\psi\vert^{2} $.

For the second term in (\ref{compact}), we use again the Cauchy-Schwarz inequality and get
\be\label{due}
\begin{split}
\int_{\Omega}\left\vert (\cV(\psi_{n})\psi-\cV(\psi)\psi)\varphi \right\vert&\leq \left(\int_{\Omega}\vert \psi\varphi\vert^{2}\right)^{\frac{1}{2}}\left(\int_{\Omega}\vert \cV(\psi_{n})-\cV(\psi)\vert^{2}\right)^{\frac{1}{2}}\\&\leq\Vert\varphi\Vert_{4}\Vert \psi\Vert_{4}\Vert \cV(\psi_{n})-\cV(\psi)\Vert_{2}\\&\leq C\Vert \cV(\psi_{n})-\cV(\psi)\Vert_{2} \longrightarrow 0
\end{split}
\ee
as $n\rightarrow+\infty$, since  $\cV(\psi_{n})=(-\Delta)^{-\frac{1}{2}}\vert\psi_{n}\vert^{2}\in W^{1,\frac{3}{2}}(\Omega,\C^{2})\hookrightarrow L^{2}(\Omega,\C^{2})$ and $\vert\psi_{n}\vert^{2}\rightarrow\vert\psi\vert^{2}$ strongly in $L^{\frac{3}{2}}$, as shown in Prop.(\ref{convergence}).

Thus combining (\ref{uno}) and (\ref{due}) we have
\be
\left\vert\int_{\Omega}\left(\cV(\psi_{n})\psi_{n}-\cV(\psi)\psi\right)\overline{\varphi} \right\vert\longrightarrow0
\ee
uniformly with respect to $\varphi$, as $n\rightarrow+\infty$.
\end{proof}

\noindent\textbf{$(-\Delta)^{-\frac{1}{2}}$ is positivity-preserving.}
For the sake of brevity we will only sketch the argument, referring to the mentioned references for more details.

Recall that $-\Delta$ is the Dirichlet laplacian on $L^{2}(\Omega)$, with domain $H^{1}_{0}(\Omega)$.

The starting point is the following identity of $L^{2}$-operators: 
\be\label{formula}
(-\Delta)^{-\frac{1}{2}}=\frac{1}{\sqrt{\pi}}\int^{+\infty}_{0}e^{-t^{2}\Delta} dt.
\ee 
Indeed, let $(e_{n})_{n\in\mathbb{N}}\subseteq L^{2}(\Omega)$ be a Hilbert basis of eigenfunctions of $-\Delta$, with associated eigenvalues $0<\mu_{n}\uparrow+\infty$. 

 For any $n\in\mathbb{N}$ the operator on the r.h.s. of (\ref{formula}) acts on each $e_{n}$ as the multiplication operator by the function
\be
\frac{1}{\sqrt{\pi}}\int^{+\infty}_{0}e^{-t^{2}\mu_{n}} dt=\frac{1}{\sqrt{\mu_{n}}}.
\ee

To prove the claim it is thus sufficient to prove that the heat kernel $e^{-s\Delta}$ is positivity-preserving. This follows from the
\begin{thm}{(First Beurling-Deny criterion)}\label{criterion}
Let $L\geq 0$ be a self-adjoint operator on $L^{2}(\Omega)$. Extend $\langle u,Lu\rangle_{L^{2}}$ to all $L^{2}$ by setting it equal to $+\infty$, when $u$ does not belong to the form-domain of $L$. The following are equivalent:

\begin{itemize}
\item $e^{-sL}$ is positivity-preserving for all $s>0$;
\item $\langle\vert u\vert,L\vert u\vert\rangle_{L^{2}}\leq\langle u,Lu\rangle_{L^{2}}, \quad\forall u\in L^{2}(\Omega)$.
\end{itemize}

\end{thm}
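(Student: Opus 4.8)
The plan is to establish the two implications separately, writing $T_{s}:=e^{-sL}$ for the semigroup and $Q(u):=\langle u,Lu\rangle_{L^{2}}$ for the associated quadratic form, extended by $+\infty$ off the form domain as in the statement. The only analytic input about $L$ that I shall use repeatedly is the spectral representation, through the elementary identity
\[
\tfrac{1}{t}\langle u,(I-T_{t})u\rangle_{L^{2}}=\int_{0}^{\infty}\tfrac{1-e^{-t\lambda}}{t}\,d\langle u,E_{\lambda}u\rangle ,
\]
where $(E_{\lambda})$ is the spectral family of $L$. Since $t\mapsto \tfrac{1-e^{-t\lambda}}{t}$ increases to $\lambda$ as $t\downarrow 0$, the left-hand side increases monotonically to $Q(u)$, with value $+\infty$ exactly when $u$ lies outside the form domain. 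This representation is what lets me pass freely between the semigroup and the form.

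For the implication \emph{positivity-preserving $\Rightarrow$ form inequality}, I first record that positivity preservation of $T_{t}$ forces the pointwise domination $|T_{t}f|\leq T_{t}|f|$: for real $g$ one has $-T_{t}|g|\leq T_{t}g\leq T_{t}|g|$ by monotonicity, and the complex case follows from $\mathrm{Re}(e^{i\alpha}T_{t}f)=T_{t}\big(\mathrm{Re}(e^{i\alpha}f)\big)\leq T_{t}|f|$ and a supremum over a countable dense set of phases $\alpha$. Because $L\geq 0$ is self-adjoint, $T_{t}$ is self-adjoint and nonnegative, so $\langle u,T_{t}u\rangle_{L^{2}}\geq 0$ is real, and the domination gives
\[
\langle u,T_{t}u\rangle_{L^{2}}\leq \Big|\int_{\Omega}\overline{u}\,T_{t}u\Big|\leq \int_{\Omega}|u|\,T_{t}|u|=\langle |u|,T_{t}|u|\rangle_{L^{2}}.
\]
Subtracting from $\|u\|_{2}^{2}=\||u|\|_{2}^{2}$ yields $\tfrac{1}{t}\langle |u|,(I-T_{t})|u|\rangle\leq \tfrac{1}{t}\langle u,(I-T_{t})u\rangle$; letting $t\downarrow 0$ and invoking the monotone limit above gives $Q(|u|)\leq Q(u)$, and also shows $|u|$ lies in the form domain whenever $u$ does.

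For the converse \emph{form inequality $\Rightarrow$ positivity-preserving}, I would reduce everything to the resolvent $R_{\lambda}:=(L+\lambda)^{-1}$, $\lambda>0$, using the exponential formula $T_{t}=\lim_{n\to\infty}(I+\tfrac{t}{n}L)^{-n}$ (strong limit): each factor equals $\tfrac{n}{t}R_{n/t}$, so once $R_{\lambda}$ is positivity-preserving the operators $(I+\tfrac{t}{n}L)^{-n}$ are too, and the strong limit $T_{t}$ inherits the property. To prove $R_{\lambda}$ positivity-preserving I work with real functions (the complex case being immediate), noting first that the form inequality is equivalent, after expanding $Q(|u|)=Q(u^{+}+u^{-})$ and $Q(u)=Q(u^{+}-u^{-})$, to the cross-term bound $Q(u^{+},u^{-})\leq 0$ for all real $u$ in the form domain. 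Given $f\geq 0$, set $u:=R_{\lambda}f$, so $(L+\lambda)u=f$; testing against $u^{-}\geq 0$ and using $u^{+}u^{-}=0$ pointwise (hence $\langle u,u^{-}\rangle=-\|u^{-}\|_{2}^{2}$ and $Q(u,u^{-})=Q(u^{+},u^{-})-Q(u^{-})\leq 0$) gives
\[
0\leq \langle f,u^{-}\rangle=Q(u,u^{-})+\lambda\langle u,u^{-}\rangle\leq -Q(u^{-})-\lambda\|u^{-}\|_{2}^{2}\leq -\lambda\|u^{-}\|_{2}^{2},
\]
forcing $u^{-}=0$, i.e. $R_{\lambda}f\geq 0$.

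The steps requiring genuine care are confined to the converse direction: justifying the exponential formula in the self-adjoint setting (standard, via the spectral theorem), and, more importantly, the tacit use of a real structure — one must know that $R_{\lambda}$ maps real functions to real functions so that the decomposition $u=u^{+}-u^{-}$ is meaningful. This is the real obstacle in the abstract statement; for the operator of interest, the Dirichlet Laplacian $L=-\Delta$, it is automatic since $-\Delta$ commutes with complex conjugation, and condition (b) is then the classical inequality $\int_{\Omega}|\nabla|u||^{2}\leq\int_{\Omega}|\nabla u|^{2}$. The remaining arguments are the soft manipulations indicated above.
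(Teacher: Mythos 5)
First, a point of reference: the paper does not prove this theorem at all --- it is quoted and attributed to Reed--Simon (Theorem XIII.50 of \cite{reedsimonIV}). So your proposal must be judged against the classical proof cited there. Your first direction (positivity preserving $\Rightarrow$ form inequality) is complete and is exactly that classical argument: the pointwise domination $\vert T_{t}f\vert\leq T_{t}\vert f\vert$, the resulting inequality $\langle u,T_{t}u\rangle\leq\langle\vert u\vert,T_{t}\vert u\vert\rangle$, and the monotone convergence of $t^{-1}\langle u,(I-T_{t})u\rangle$ to the possibly infinite form value, which also handles the extension of the form by $+\infty$. No objections there, and the reduction of the converse to the resolvent via the exponential formula is likewise sound.

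The converse as you execute it, however, has a genuine hole --- one you flag yourself: your test-function argument needs $u=R_{\lambda}f$ to be \emph{real} whenever $f\geq0$, so that the decomposition $u=u^{+}-u^{-}$ makes sense, and nothing in your write-up derives this reality-preservation from hypothesis (b). Condition (b) constrains only moduli, and for an abstract self-adjoint $L\geq0$ on a complex $L^{2}$ space you have no a priori control on how $R_{\lambda}$ acts on real functions (you would have to extract this from (b), which you do not do). Declaring the issue ``automatic for $-\Delta$'' rescues the one application made in the paper, where $L$ is the Dirichlet Laplacian and only the implication (b) $\Rightarrow$ (a) is used, but it does not prove the theorem as stated, which concerns an arbitrary $L\geq 0$. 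The standard repair --- and it is precisely how Reed--Simon close this step --- keeps your resolvent reduction but replaces the testing argument by a variational one: $u=R_{\lambda}f$ is the \emph{unique} minimizer over the form domain of $F(v)=Q(v)+\lambda\Vert v\Vert_{2}^{2}-2\,\mathrm{Re}\langle f,v\rangle$ (strict convexity and coercivity in the form norm). If $f\geq0$, then for every $v$ in the form domain one has $Q(\vert v\vert)\leq Q(v)$ by (b), $\Vert\,\vert v\vert\,\Vert_{2}=\Vert v\Vert_{2}$, and $\mathrm{Re}\langle f,v\rangle\leq\int_{\Omega} f\vert v\vert=\langle f,\vert v\vert\rangle$, hence $F(\vert v\vert)\leq F(v)$. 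Applied to the minimizer $u$, uniqueness forces $u=\vert u\vert\geq0$; no real structure is ever invoked, and complex $u$ is handled automatically. With that single substitution your proposal becomes a complete and correct proof of the stated theorem; the remaining ingredients (exponential formula, strong limits of positivity-preserving operators, the cross-term expansion of $Q(\vert u\vert)-Q(u)$) are all correct.
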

A proof of the above result can be found in (\cite{reedsimonIV}, Theorem XIII.50).

Taking $L=-\Delta$, the second condition in the above theorem corresponds to the well-known fact that for any $u\in H^{1}_{0}(\Omega)$ there holds $$\vert\nabla \vert u\vert\vert\leq\vert\nabla u\vert\qquad\mbox{a.e. in}\quad\Omega$$  and then
$$\langle\vert u\vert,L\vert u\vert\rangle_{L^{2}}=\int_{\Omega}\vert\nabla\vert u\vert\vert^{2}\leq\int_{\Omega}\vert\nabla u\vert^{2}=\langle u,Lu\rangle_{L^{2}} $$
(see Theorem 6.1 in \cite{liebloss}). This concludes the proof.

\noindent\textbf{Acknowledgments.} 
I wish to thank \'{E}ric S\'{e}r\'{e} for helpful conversations on his work on Maxwell-Dirac equations and Mathieu Lewin for pointing out useful references.

\bibliographystyle{siam}
\bibliography{DiracHartree}

\end{document}